\font \Bbbten=msbm10 \font \Bbbsev=msbm7 \font \Bbbfiv=msbm5
\newcommand{\N}{\mbox{$I\!\!N$}}
\newcommand{\Z}{\mbox{$Z\!\!\!Z$}}
\newcommand{\R}{\mbox{$I\!\!R$}}
\newcommand{\Cy}{\mathcal{C}}
\newcommand{\cR}{\mathcal{R}}
\newcommand{\dist}{{\rm dist}}
\newcommand{\dif}{{\rm Diff}}
\newcommand{\diam}{{\rm diam} }
\newcommand{\trans}{\mbox{$\,{ \top} \;\!\!\!\!\!\!\raisebox{-.3ex}{$\cap$}\,$}}
\newcommand{\cita}[7]{{\sc #1, }{\it #2, }{\small #3, {\bf #4 } (#5), p.
#6-#7.}}
\newcommand{\cit}[5]{{\sc #1, }{\it #2, }{\small #3, {\bf #4 } #5.}}
\newtheorem{thm}{Theorem}[section]
\newtheorem{maintheorem}{Theorem}
\newtheorem{claim}[thm]{Claim}
\newtheorem{Df}{Definition}[section]
\newtheorem{Teo}{Theorem}[section]
\newtheorem{Lem}[Teo]{Lemma}
\newtheorem{Cor}[Teo]{Corollary}
\newtheorem{Prop}[Teo]{Proposition}
\newtheorem{Obs}[Teo]{Remark}
\title{Robust entropy expansiveness implies generic domination.}
\author{M. J. Pacifico, J. L. Vieitez
 }
\date{March, 16, 2009}
\begin{document}
\maketitle
\begin{abstract}
Let $f: M \to M$ be a $C^r$-diffeomorphism, $r\geq 1$, defined on a
compact boundaryless $d$-dimensional manifold $M$, $d\geq 2$, and let $H(p)$ be the homoclinic
class associated to the hyperbolic periodic point $p$. We prove that
if there exists a $C^1$ neighborhood $\mathcal{U}$ of $f$ such that
for every $g\in {\cal U}$ the continuation $H(p_g)$ of $H(p)$ is
entropy-expansive then there is a $Df$-invariant dominated splitting
for $H(p)$ of the form $E\oplus F_1\oplus\cdots \oplus F_c\oplus G$
where $E$ is contracting, $G$ is expanding and all $F_j$ are one
dimensional and not hyperbolic.
\end{abstract}\thanks{\footnotesize{
2000 Mathematics Subject Classification: 37D30, 37C29, 37E30}}

\section{Introduction}

In this paper we study what are the consequences at the dynamical
behavior of the tangent map $Df$ of a diffeomorphism $f:M\to M$,
assuming that $f$ is robustly entropy expansive. In this direction
we obtain that the tangent bundle has a $Df$-invariant dominated
splitting of the form $E\oplus F_1\oplus\cdots \oplus F_c\oplus G$
where $E$ is contracting, $G$ is expanding and all $F_j$ are one
dimensional and not hyperbolic.

 Let $M$ be a compact connected boundary-less Riemannian
$d$-dimensional manifold, $d\geq 2$, and $f : M \rightarrow M$ a homeomorphism.
Let $K$ be a compact invariant subset of $M$ and $\dist:M\times
M\to\R^+$ a distance in $M$ compatible with its Riemannian
structure. For $E,F \subset K$, $n\in\N$ and $\delta>0$ we say that
$E$ $(n,\delta)$-spans $F$ with respect to $f$ if for each $y\in F$
there is $x\in E$ such that $\dist(f^j(x),f^j(y))\leq \delta$ for
all $j=0,\ldots ,n-1$. Let $r_n(\delta,F)$ denote the minimum
cardinality of a set that $(n,\delta)$-spans $F$. Since $K$ is
compact $r_n(\delta,F)<\infty$. We define
$$h(f,F,\delta)\equiv \lim\sup_{n\to\infty}\frac{1}{n}\log(r_n(\delta,F))$$
and the topological entropy of $f$ restricted to $F$ as
$$h(f,F)\equiv \lim_{\delta\to 0}h(f,F,\delta)\, .$$
The last limit exists since $h(f,F,\delta)$ increases as
$\delta$ decreases to zero.

\begin{Df} \label{gammadeepsilon}
For $x\in K$ let us denote
$$\Gamma_\epsilon(x,f)\equiv \{y\in M\,/\,
d(f^n(x),f^n(y))\leq \epsilon,\, n\in\Z\}\, .$$ We will simply
write $\Gamma_\epsilon(x)$ instead of $\Gamma_\epsilon(x,f)$ when
it is understood  which $f$ we refer to.

Following Bowen (see \cite{Bo}) we say that $f/K$ is {\em
entropy-expansive} or {\em $h$-expansive} for short, if and only if
there exists $\epsilon>0$ such that
$$h^*_f(\epsilon)\equiv \sup_{x\in K}h(f,\Gamma_\epsilon(x)) = 0\, .$$
\end{Df}

 \begin{Teo}\cite[Theorem 2.4]{Bo} \label{Bowen}
 For all homeomorphism $f$ defined on a compact invariant set $K$ it holds
 $$h(f,K)\leq h(f,K,\epsilon) + h^*_f(\epsilon)\mbox{ in particular }
 h(f,K)=h(f,K,\epsilon) \mbox{ if } h^*_f(\epsilon)=0\, .$$
 \end{Teo}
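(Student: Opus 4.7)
The goal is
$$h(f,K,\delta)\leq h(f,K,\epsilon)+h^*_f(\epsilon)$$
for every $\delta\in(0,\epsilon)$; the ``in particular'' clause then follows by letting $\delta\downarrow 0$ and using the definition of $h(f,K)$. The plan is a two-scale counting argument. For $m\geq 1$ let
$$B_m(x,\epsilon):=\{y\in M:\dist(f^i(y),f^i(x))\leq\epsilon,\ 0\leq i\leq m-1\}$$
denote the forward Bowen tube. I will cover $K$ by $(n,\epsilon)$-tubes and refine each to scale $\delta$, splitting the entropy into the coarse part $h(f,K,\epsilon)$ and a fiberwise correction, which I bound by $h^*_f(\epsilon)$ using that $B_m(x,\epsilon)$ is only a mild thickening of $\Gamma_\epsilon(x)$ for large $m$.

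The delicate step is a \emph{uniform spanning lemma}: fix $\eta>0$; there exists an integer $m=m(\eta,\delta,\epsilon)$ such that for every $x\in K$ the set $B_m(x,\epsilon)\cap K$ is $(m,\delta)$-spanned by at most $e^{m(h^*_f(\epsilon)+\eta)}$ points. For a fixed $x$, observe that $\Gamma_\epsilon(x)=\bigcap_{k\geq 0}f^{-k}\bigl(B_{2k+1}(f^{-k}(x),\epsilon)\bigr)$ is a decreasing intersection of compacta whose entropy is at most $h^*_f(\epsilon)$, so for large $m$ any efficient $(m,\delta)$-spanning set for $\Gamma_\epsilon(x)$ remains $(m,\delta)$-spanning for the slightly fatter tube $B_m(x,\epsilon)$. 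The uniformity in $x$ is extracted from compactness of $K$: if no uniform $m$ worked, one could select $x_n\to x_\infty$ whose required cardinality grows faster than $e^{m_n(h^*_f(\epsilon)+\eta)}$ with $m_n\to\infty$; upper semicontinuity $\Gamma_\epsilon(x_\infty)\supseteq \limsup_n\Gamma_\epsilon(x_n)$ (immediate from the definition of $\Gamma_\epsilon$) would then force $h(f,\Gamma_\epsilon(x_\infty))>h^*_f(\epsilon)$, contradicting the definition of $h^*_f(\epsilon)$.

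Once the uniform lemma is in hand, the rest is bookkeeping. Let $E_n\subset K$ be an $(n,\epsilon)$-spanning set of minimum cardinality $r_n(\epsilon,K)$, so that $\{B_n(z,\epsilon)\}_{z\in E_n}$ covers $K$. Write $n=qm+s$ with $0\leq s<m$. Applying the uniform lemma to each block of length $m$ along the orbit of $z$ yields an $(n,\delta)$-spanning set for $B_n(z,\epsilon)\cap K$ of cardinality at most $C_m\cdot e^{qm(h^*_f(\epsilon)+\eta)}$, with $C_m$ absorbing the last short block of length $s$. Summing over $z\in E_n$,
$$r_n(\delta,K)\leq r_n(\epsilon,K)\cdot C_m\cdot e^{qm(h^*_f(\epsilon)+\eta)}.$$
Taking $\tfrac{1}{n}\log$, passing to $\limsup_{n\to\infty}$ (so $qm/n\to 1$ and the $C_m$ term vanishes), and then letting $\eta\downarrow 0$ yields the claim. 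Thus the single real obstacle is the uniform spanning lemma, which is where the compactness of $K$ and the semicontinuity of $x\mapsto \Gamma_\epsilon(x)$ do the essential work.
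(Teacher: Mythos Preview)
The paper does not supply a proof of this statement; it is simply quoted from Bowen \cite[Theorem~2.4]{Bo}. Your two-scale block-counting outline is indeed Bowen's strategy, and the reduction to a ``uniform spanning lemma'' followed by concatenation over $m$-blocks is the right skeleton.

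There is, however, a genuine gap in your justification of that lemma: a mismatch between the \emph{forward} Bowen tubes $B_m(x,\epsilon)$ you need in the concatenation and the \emph{two-sided} set $\Gamma_\epsilon(x)$ that defines $h^*_f(\epsilon)$ in this paper. The decreasing intersection $\bigcap_{k} f^{-k}\bigl(B_{2k+1}(f^{-k}(x),\epsilon)\bigr)=\Gamma_\epsilon(x)$ that you write is correct, but its terms are \emph{two-sided} Bowen balls, not the forward $B_m(x,\epsilon)$. As $m\to\infty$ the forward tubes decrease only to the one-sided set $\{y:\dist(f^j(y),f^j(x))\le\epsilon\ \forall\,j\ge 0\}$, which in general strictly contains $\Gamma_\epsilon(x)$ and can have strictly larger entropy. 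Consequently an $(m,\delta)$-spanning set for $\Gamma_\epsilon(x)$ need not $(m,\delta)$-span $B_m(x,\epsilon)$ for any $m$, and your limiting contradiction stalls: the assumed growth of $r_{m_n}\bigl(\delta,B_{m_n}(x_n,\epsilon)\bigr)$ is never linked to $h\bigl(f,\Gamma_\epsilon(x_\infty)\bigr)$, and the semicontinuity $\Gamma_\epsilon(x_\infty)\supseteq\limsup_n\Gamma_\epsilon(x_n)$ by itself does not bridge that gap. Bowen's remedy, for homeomorphisms, is to run the whole argument with two-sided $(n,\delta)$-spanning numbers and two-sided Bowen balls (which compute the same entropies on an invariant compact $K$); then the $m$-balls genuinely shrink to $\Gamma_\epsilon(x)$, the pointwise step becomes an honest compactness/finite-cover argument rather than a limit, and your block concatenation goes through unchanged.
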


A similar notion to $h$-expansiveness, albeit weaker, is the notion
of {\em asymptotically $h$-expansiveness } introduced by Misiurewicz
\cite{Mi}: let $K$ be a compact metric space and $f:K\to K$ an
homeomorphism. We say that $f$ is asymptotically $h$-expansive if
and only if
$$\lim_{\epsilon\to 0} h^*_f(\epsilon)=0\, .$$
Thus, we do not require that for a certain $\epsilon > 0$,
$h^*_f(\epsilon)=0$ but that $h^*_f(\epsilon)\to 0$ when
$\epsilon\to 0$. It has been proved by Buzzi, \cite{Bu}, that any
$C^\infty$ diffeomorphism defined on a compact manifold is
asymptotically $h$-expansive.
The interessed reader can found  examples of
diffeomorphisms that are not entropy expansive neither asymptotically
entropy expansive in \cite{Mi,PaVi}.

Next we recall the notion of dominated splitting.

\begin{Df}
\label{domi}
We say that a compact $f$-invariant set $\Lambda\subset M$ admits a
dominated splitting if the tangent bundle $T_{\Lambda}M$ has a
continuous $Df$-invariant splitting $E\oplus F$ and there exist
$C>0,\, 0<\lambda <1$, such that

\begin{equation} \label{domino}
\|Df^n|E(x)\|\cdot\|Df^{-n}|F(f^n(x))\|\leq C\lambda^n
\;\forall x\in \Lambda,\, n\geq 0.
\end{equation}
\end{Df}

Observe that if the topological entropy of a map $f:M\to M$
vanishes, $h(f)=0$, then automatically $f$ is $h$-expansive. For
instance Morse-Smale diffeomorphisms $\varphi:M\to M$ are
$h$-expansive. We remark that Morse-Smale diffeomorphisms are
$C^1$-stable under perturbations and so they constitute a class
which is robustly $h$-expansive.

 Here we are interested in
diffeomorphisms that exhibit a chaotic behavior, i.e.: their
topological entropy is positive. Moreover, we restrict our study to
homoclinic classes $H(p)$ associated to saddle-type hyperbolic periodic points.
Recall that the homoclinic class $H(p)$ of a saddle-type hyperbolic periodic
point $p$ of $f \in \dif^1(M)$ is the closure of the intersections
between the unstable manifold $W^ u(p)$ of $p$ and the
stable manifold  $W^s(p)$ of $p$.
These classes persist under perturbations and we wish to establish
the property of those classes under the assumption that
$h$-expansiveness is robust.

\begin{Df}\label{robuhagar}
Let $M$ be a compact boundaryless $C^\infty$ manifold and
 $f:M\to M$ be a $C^r$ diffeomorphism, $r\geq 1$. Let $H(p)$ be a
$f$-homoclinic class associated to the $f$-hyperbolic periodic point
$p$. Assume that there is a $C^r$ neighborhood ${\cal U}$ of $f$,
such that for any $g\in {\cal U}$ it holds that the continuation
$H(p_g)$ of $H(p)$  is $h$-expansive. Then we say that $f/H(p)$ is
$C^r$-robustly $h$-expansive.

\end{Df}

In \cite[Theorem B]{PaVi} we obtain that if $H(p,f)$ is isolated and
the finest dominated splitting on $H(p,f)$ is
$$T_{H(p,f)}M=E\oplus F_1\oplus\cdots\oplus F_{k}\oplus
G$$ with $E$ contracting, $G$ expanding and all $F_j$, $j=1,\ldots
,k$, one dimensional and not hyperbolic, then $f/H(p,f)$ is
$h$-expansive. Moreover, since the dominated splitting is preserved
under $C^1$-perturbations this result holds for a $C^1$-neighborhood
$\mathcal{U}(f)\subset\dif^1(M)$, i.e.: $h$-expansiveness is
$C^1$-robust.

Roughly speaking, \cite[Theorem B]{PaVi} says that the domination
property implies that small neighbourhoods in $H(p)$  have an
`ordered dynamics' and there cannot appear `arbitrarily small
horseshoes', i.e:, horseshoes generated by homoclinic points in
$W^s_{\xi}(x)\cap W^u_\xi(x)$ for $\xi>0$ arbitrarily small and
$x\in H(p)$ periodic, as in the example given in \cite{PaVi}[Section
2] for a surface diffeomorphism. The presence of these arbitrarily
small horseshoes would imply that $\sup_{x\in
H(p)}h(f,\Gamma_\epsilon(x))
> 0$ for any $\epsilon >0$.

This paper is intended to continue \cite{PaVi} in the reverse
direction: we analyze the consequences of $h$-expansiveness to hold
in a $C^1$-neighbourhood $\mathcal{U}(f)\subset \dif^1(M)$ of $f$.
Our main results are the following:

\begin{maintheorem} \label{prin1}
Let $M$, $f:M\to M$ and $H(p)$ be as in Definition \ref{robuhagar}
for $r=1$. Then $H(p)$ has a dominated splitting $E\oplus F$.
\end{maintheorem}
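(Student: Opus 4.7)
The plan is to argue by contradiction. Assume $f/H(p)$ is $C^1$-robustly $h$-expansive on a neighbourhood $\mathcal{U}\subset\dif^1(M)$, but that $H(p)$ admits no non-trivial dominated splitting of the form $E\oplus F$. Mirroring the mechanism discussed right after the statement of \cite[Theorem B]{PaVi}, the goal is to produce a $g\in\mathcal{U}$ whose homoclinic class $H(p_g)$ contains non-trivial invariant subsets $\Lambda_n$ of arbitrarily small diameter with $h(g,\Lambda_n)>0$. Picking $x_n\in\Lambda_n$, the invariance and diameter control will give $\Lambda_n\subset\Gamma_{\epsilon_n}(x_n,g)$ with $\epsilon_n\to 0$, so that $h^*_g(\epsilon_n)\geq h(g,\Lambda_n)>0$ for every $n$; this contradicts the $h$-expansiveness of $g/H(p_g)$ and thus the choice of $\mathcal{U}$.

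The first step reduces the problem to periodic orbits. By Pugh's closing lemma together with Hayashi's connecting lemma, one may assume that the hyperbolic periodic points of $f$ homoclinically related to $p$ are dense in $H(p)$. Assuming there is no dominated splitting of any index $1\leq i\leq d-1$ over $H(p)$, Mañé's classical characterisation of the absence of domination, in the refined form of Bonatti--Díaz--Pujals--Ures and Wen, yields, for each prescribed pair $(m,\lambda)$, a periodic orbit $\mathcal{O}(q_n)\subset H(p)$ whose derivative fails the $(m,\lambda)$-domination condition of index $i$; equivalently the angles between the would-be dominated and dominating subspaces along $\mathcal{O}(q_n)$ tend to $0$ while the corresponding mean contraction/expansion rates approach each other.

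The second step uses Franks' lemma and its $C^1$-realisation refinement (Gourmelon) to perturb $f$ inside $\mathcal{U}$ to a diffeomorphism $g_1$ having a periodic orbit $\mathcal{O}(q)\subset H(p_{g_1})$ on which $Dg_1^{\pi(q)}$ has a two-dimensional invariant block acting as a rotation composed with a homothety: either a genuine pair of non-real complex eigenvalues of equal modulus, or a Jordan block with equal real eigenvalues. A second $C^1$-small perturbation $g\in\mathcal{U}$ then either unfolds a small homoclinic tangency associated to $\mathcal{O}(q)$ or, using the conformal $2$-block directly, produces a transverse homoclinic intersection localised in an arbitrarily small neighbourhood of $\mathcal{O}(q)$. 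The Birkhoff--Smale theorem applied to this homoclinic orbit furnishes a basic hyperbolic set $\Lambda_n\subset H(p_g)$ with $\diam(\Lambda_n)\leq\epsilon_n$ and $h(g,\Lambda_n)>0$, which is exactly what was needed to close the argument.

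The main obstacle, and the real content of the theorem, is controlling the perturbations so that \emph{everything stays inside the fixed neighbourhood $\mathcal{U}$} while the horseshoes land inside the continuation $H(p_g)$. In particular: (a) the periodic orbits $\mathcal{O}(q_n)$ witnessing the absence of domination must be chosen homoclinically related to $p$, so that after perturbation they remain inside $H(p_g)$ and are not trapped in a disjoint hyperbolic piece; this is handled by Hayashi's connecting lemma together with the semicontinuity of homoclinic classes in $g$. (b) The size of the Franks--Gourmelon perturbation that converts a weak-angle periodic block into a conformal $2$-block, and that of the subsequent perturbation creating the tangency, must be bounded in terms of the weakness of the original domination on $\mathcal{O}(q_n)$; this quantitative bookkeeping is what forces $\epsilon_n\to 0$ and the sequence of perturbed diffeomorphisms to remain in $\mathcal{U}$, and is precisely the delicate linear-algebraic input we borrow from Mañé's and Bonatti--Díaz--Pujals--Ures's framework.
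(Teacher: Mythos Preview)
Your overall strategy --- contradict robust $h$-expansiveness by exhibiting, inside some $g\in\mathcal{U}$, invariant sets $\Lambda_n\subset H(p_g)$ with positive entropy and $\Lambda_n\subset\Gamma_{\epsilon_n}(x_n,g)$ for $\epsilon_n\to 0$ --- is the right one, and it is also the paper's. But the construction you sketch does not actually deliver a \emph{single} $g$ with this property, and that is the essential point.

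Your steps 2--3 produce, for each $n$, a perturbation $g_n\in\mathcal{U}$ with one small horseshoe $\Lambda_n$. This is not a contradiction: robust $h$-expansiveness only guarantees that each $g_n$ has \emph{some} expansiveness constant $\epsilon(g_n)>0$, not a uniform one, so $h^*_{g_n}(\epsilon_n)>0$ is perfectly compatible with $g_n$ being $h$-expansive at scale $\epsilon_n/2$. Your last paragraph even speaks of ``the sequence of perturbed diffeomorphisms'', which confirms the mismatch with your stated goal of a single $g$. To close the argument you would need to superpose infinitely many Franks-type perturbations with disjoint supports and $C^1$-summable sizes so that the limit lies in the open set $\mathcal{U}$ and still carries all the horseshoes; this is not automatic and you do not address it. A secondary imprecision: the Birkhoff--Smale horseshoe associated to a transverse homoclinic orbit of $\mathcal{O}(q)$ contains the whole periodic orbit, so $\diam(\Lambda_n)$ is not small; what you really want (and what suffices) is $\Lambda_n\subset\Gamma_{\epsilon_n}(q,g)$, i.e.\ that orbits in $\Lambda_n$ $\epsilon_n$-shadow the orbit of $q$.

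The paper avoids this accumulation problem by a different mechanism. It works directly with transverse homoclinic points $x\in W^s(p)\cap W^u(p)$ rather than with auxiliary periodic orbits. From the failure of domination it first makes the angle at some homoclinic point arbitrarily small (Lemma~\ref{le:dom}), then perturbs once to a genuine tangency (Proposition~\ref{buenangulo}), and then --- this is the key step you are missing --- perturbs once more to obtain an entire \emph{arc} $\beta$ of flat tangencies between $W^s(p_g)$ and $W^u(p_g)$ (Proposition~\ref{chato}). Having an arc of tangencies, a single further $C^1$-small perturbation supported near $\beta$ (a carefully designed $C^2$ bump, following \cite{DN}) produces \emph{infinitely many} pairwise disjoint horseshoes $H_{\epsilon_n}\subset H(p,G)$ with $\diam(H_{\epsilon_n})\to 0$, all for the same $G\in\mathcal{U}$. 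That is what forces $h^*_G(\epsilon)>0$ for every $\epsilon>0$ and yields the contradiction.
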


In fact  \cite[Example 2]{PaVi}  shows that in dimension
greater or equal to three  the existence of a dominated splitting
for $H(p)$ is not
enough tho guarantee $h$-expansiveness, so it is natural to search for
a stronger property.

Let us recall the concept of {\sl finest dominated splitting}
introduced in \cite{BDP}.
\begin{Df} \label{finest}
Let $\Lambda\subset M$ be a compact $f$-invariant subset such that
$TM/\Lambda=E_1\oplus E_2\oplus \cdots \oplus E_k$ with $E_j$ $Df$
invariant, $j=1,\ldots , k$. We say that $E_1\oplus E_2\oplus \cdots
\oplus E_k$ is dominated if for all $1 \leq j\leq k-1$
$$(E_1\oplus \cdots E_j)\, \oplus \, (E_{j+1}\oplus\cdots \oplus E_k)$$
has a dominated splitting. We say that $E_1\oplus E_2\oplus \cdots
\oplus E_k$ is the finest dominated splitting when for all
$j=1,\ldots ,k$ there is no possible decomposition of $E_j$ as two
invariant sub-bundles having domination.
 \end{Df}

An improvement of Theorem \ref{prin1} is the following.
\begin{maintheorem} \label{prin2}
Let $M$, $f:M\to M$ and $H(p)$ be as in Definition \ref{robuhagar}
for $r=1$. Then the finest dominated splitting in $H(p)$ has the
form $E\oplus F_1\oplus\cdots \oplus F_c\oplus G$ where all $F_j$
are one dimensional and not hyperbolic.
\end{maintheorem}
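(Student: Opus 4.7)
The strategy is to start from the dominated splitting granted by Theorem \ref{prin1}, refine it to the finest dominated splitting $T_{H(p)}M = E_0 \oplus E_1 \oplus \cdots \oplus E_s$, and then show that every intermediate bundle $E_j$ ($1 \le j \le s-1$) is one-dimensional, while the extremal bundles $E_0$ and $E_s$ are respectively uniformly contracting and uniformly expanding. Setting $E := E_0$, $G := E_s$, and letting $F_1,\dots,F_c$ be the intermediate bundles would then give the stated form; non-hyperbolicity of each $F_j$ follows from maximality of $E$ and $G$, since a uniformly contracting or expanding intermediate $F_j$ would simply merge into $E$ or $G$ contrary to the finest property.

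The heart of the argument is the dimension bound on each intermediate $E_j$. Suppose, for contradiction, that $\dim E_j \ge 2$ for some $1 \le j \le s-1$. Since $E_j$ belongs to the finest splitting, it admits no further dominated decomposition over $H(p)$. By the Bonatti--D\'iaz--Pujals theorem, this non-domination property, combined with Ma\~n\'e's ergodic closing lemma and Franks' lemma, produces some $g \in \mathcal{U}$ arbitrarily $C^1$-close to $f$ having a periodic point $q \in H(p_g)$ whose derivative $Dg^{\pi(q)}|_{E_j(q)}$ has all eigenvalues of the same modulus. A further perturbation inside $\mathcal{U}$ either turns these into a complex (rotating) pair or creates a homoclinic tangency associated to $q$ along the $E_j$-direction; unfolding this configuration by one more small $C^1$-perturbation (still inside $\mathcal{U}$) yields, via Newhouse-type constructions, a sequence of horseshoes near $q$ whose diameters tend to zero and whose topological entropies are bounded below by a positive constant. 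Each such horseshoe sits inside $\Gamma_\epsilon(q,g)$ for every sufficiently small $\epsilon > 0$, so $h^*_g(\epsilon) > 0$ for all such $\epsilon$, contradicting the $h$-expansiveness of $g$ required by robust $h$-expansiveness.

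A parallel $C^1$-perturbation argument handles the extremal bundles. If $E_0$ failed to be uniformly contracting, then combining Ma\~n\'e's and Franks' lemmas one would produce $g \in \mathcal{U}$ admitting a periodic orbit in $H(p_g)$ with non-negative Lyapunov exponent along $E_0$; a further perturbation inside $\mathcal{U}$ opens either a saddle-node configuration or a heterodimensional cycle, and unfolding once more yields small horseshoes contradicting $h$-expansiveness. Uniform expansion of $E_s$ follows by applying the same reasoning to $f^{-1}$, which is also robustly $h$-expansive on $H(p)$ since the $\Gamma_\epsilon(x,f)$ sets are symmetric in time.

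The main obstacle is controlling the $C^0$-\emph{diameter} of the horseshoes obtained after unfolding, because to contradict $h$-expansiveness one needs positive entropy inside $\Gamma_\epsilon$ for every $\epsilon > 0$, not merely entropy somewhere in $H(p_g)$. In a non-decomposable bundle of dimension at least two the presence of rotation allows the Newhouse construction to be carried out at arbitrarily small scales near $q$, forcing the diameters of the resulting horseshoes to tend to zero; organising this scale-control while simultaneously keeping every successive $C^1$-perturbation inside the fixed neighbourhood $\mathcal{U}$, and while preserving the fact that $q$ lies in the homoclinic class of the continuation $p_g$, is the main technical burden of the proof.
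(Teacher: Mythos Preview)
Your proposal overshoots the statement and contains two genuine gaps.

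First, Theorem~\ref{prin2} does \emph{not} assert that the extremal bundles are hyperbolic; that is the content of Theorem~\ref{prin3}, which requires the additional hypothesis that $H(p)$ is isolated (Lemma~\ref{extremos1} invokes \cite[Theorem~D]{BDPR} precisely for this step). Your sketch for extremal hyperbolicity---perturb to a saddle-node or a heterodimensional cycle, then ``unfold once more to small horseshoes''---does not go through: neither configuration produces, by itself, horseshoes of arbitrarily small diameter lying inside a single $\Gamma_\epsilon(x)$, so no contradiction with $h$-expansiveness arises. This entire paragraph of your argument is both unnecessary for Theorem~\ref{prin2} and unjustified.

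Second, your argument for the non-hyperbolicity of the $F_j$ is incorrect. You claim that a contracting $F_j$ would ``merge into $E$ contrary to the finest property'', but the finest dominated splitting is determined by \emph{domination}, not by hyperbolicity: two consecutive bundles can both be uniformly contracting and still appear as separate factors provided one dominates the other. Since you set $E:=E_0$ rather than the maximal contracting sub-bundle, no maximality is available. The paper obtains non-hyperbolicity by a different route, through the index structure of periodic points in the class (Theorem~\ref{abcdw}, relying on \cite{ABCDW} and \cite{Go}): saddles of every index between the extremes occur in $H(p)$, so each central $F_h$ lies in the stable direction of some periodic orbit and in the unstable direction of another.

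For the core step---one-dimensionality of the central bundles---your BDP route is in the right spirit but longer and less complete than the paper's. The paper directly cites \cite[Theorem~6.6.8]{Go} for the dichotomy: either the finest splitting already has the form~(\ref{pregenerico}) with one-dimensional centres, or an arbitrarily small $C^1$-perturbation creates a homoclinic tangency \emph{inside} $H(p_g)$; it then reuses verbatim the horseshoe construction of \S\ref{herradurita} to contradict robust $h$-expansiveness. Your approach via BDP still owes the reader a proof that the periodic point $q$ and the resulting tangency remain in the homoclinic class of $p_g$ after perturbation---this is exactly what Gourmelon's theorem packages and what you have not addressed.
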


If $H(p)$ {\em is isolated} then we may refine the previous result.
Before we announce precisely this result, let us
recall the definitions of: chain recurrent set,  isolated
homoclinic class and heterodimensional cycles..
\begin{Df}
The chain recurrent set of a diffeomorphism $f$, denoted by $R(f)$,
is the set of points $x$ such that, for every $\epsilon > 0$, there
is a closed $\epsilon$-pseudo orbit joining $x$ to itself: there is
a finite sequence $x = x_0, x_1, \ldots , x_n = x$ such that
$\dist(f(x_i), x_{i+1}) < \epsilon$.
\end{Df}

\begin{Df}
We say that $H(p)$ is isolated if there are neighborhoods
$\mathcal{U}$ of $f$ in $\dif^1(M)$ and $U$ of the homoclinic  class
class $H(p)$ in $M$ such that, for every $g \in \mathcal{U}$, the
continuation $H(p_g)$ of $H(p)$ coincides with the intersection of
the chain recurrence set of $g$, $R(g)$ with  the neighborhood $U$.
\end{Df}

\begin{Obs}
Generically a recurrence class which contains a periodic point $p_g$
coincides with $H(p_g)$, {\rm \cite{BC}}.
\end{Obs}

\begin{Df}
 We say that $\Gamma$ is a cycle if $\Gamma=\{p_i, 0\leq i \leq n, p_0=p_n\}$,
where $p_i$ are hyperbolic periodic points of $f$ and $W^u(p_i)\cap W^s(p_{i+1})
\neq \emptyset$, for all $0\leq i \leq n-1$. $\Gamma$ is called a
heterodimensional cycle if, for some $i\neq j$, $\dim(W^u(p_i))\neq \dim(W^u(p_j))$.
\end{Df}
Recall that the {\em index} of a hyperbolic periodic point $p$ is the dimension of its
unstable manifold $W^u(p)$.

\begin{maintheorem} \label{prin3}
Let $M$, $f:M\to M$ and $H(p)$ be as in Definition \ref{robuhagar}
for $r=1$. Assume moreover that $f/H(p)$ is isolated. Then for $g$
in $\mathcal{U}(f)$, $H(p_g)$ has a dominated splitting of the form
$E\oplus F_1\oplus\cdots\oplus F_k\oplus G$ where $E$ is
contracting, $G$ is expanding and all $F_j$ are not hyperbolic and
$\dim(F_j)= 1$.
 Moreover, in case that the index of periodic points in $H(p_g)$ are in a $C^1$ robust way
 equal to $\mbox{index}(p)$ then for an open dense subset $\mathcal{V}\subset
 \mathcal{U}(f)$, $H(p_g)$ is hyperbolic, i.e.: $k=0$.
\end{maintheorem}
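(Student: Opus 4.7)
The plan is to strengthen Theorem \ref{prin2}, which already provides the finest dominated splitting $E\oplus F_1\oplus\cdots\oplus F_c\oplus G$ on $H(p_g)$ with each $F_j$ one-dimensional and non-hyperbolic for every $g$ in the $C^1$-neighbourhood $\mathcal{U}(f)$ of $f$. Two additional assertions must be proved: first, the isolation hypothesis upgrades the extremal bundles to uniform hyperbolicity ($E$ contracting, $G$ expanding); second, under the robust equal-index hypothesis the middle bundles disappear on an open dense subset $\mathcal{V}\subset\mathcal{U}(f)$, so that $H(p_g)$ becomes hyperbolic.

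To prove uniform contraction of $E$ I argue by contradiction. Suppose $E$ is not uniformly contracting on $H(p_g)$ for some $g\in\mathcal{U}(f)$. Ma\~n\'e's ergodic closing lemma combined with a Pliss-type selection produces a hyperbolic periodic point $q\in H(p_g)$ whose Lyapunov exponent along $E(q)$ is non-negative. A Franks' lemma perturbation localised along the orbit of $q$ then yields $\tilde g\in\mathcal{U}(f)$ for which the continuation $\tilde q$ of $q$ has all $E$-eigenvalues of modulus strictly larger than one, so $E(\tilde q)\subset E^u(\tilde q)$ and the index of $\tilde q$ differs from $\mbox{index}(p_{\tilde g})$ by $\dim E$. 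Since $\tilde q$ lies in the isolating neighbourhood $U$ and isolation is $C^1$-robust (after shrinking $\mathcal{U}(f)$ if necessary), one has $\tilde q\in R(\tilde g)\cap U=H(p_{\tilde g})$. Thus $H(p_{\tilde g})$ contains periodic points of two distinct indices; Hayashi's connecting lemma then produces a heterodimensional cycle inside $H(p_{\tilde g})$ linking $\tilde q$ and $p_{\tilde g}$. A further $C^1$-small perturbation inside $\mathcal{U}(f)$ unfolds this cycle, producing arbitrarily small horseshoes sitting in $\Gamma_\epsilon$-balls of points of the continuation of $H(p_g)$, which contradicts the robust $h$-expansiveness (compare the surface example of \cite{PaVi}, where exactly such small horseshoes yield $h^*_f(\epsilon)>0$). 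This contradiction forces $E$ to be uniformly contracting; the symmetric argument applied to $f^{-1}$ gives uniform expansion of $G$.

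For the second assertion, assume the robust equal-index hypothesis and suppose by contradiction that some $F_j$ is non-trivial for $g$ in an open subset of $\mathcal{U}(f)$. The non-hyperbolicity of the one-dimensional bundle $F_j$, together with the standard Ma\~n\'e characterisation of hyperbolicity for dominated one-dimensional bundles, provides periodic points $r\in H(p_g)$ whose eigenvalue of $Dg^{\pi(r)}|_{F_j(r)}$ has modulus arbitrarily close to $1$. A Franks' lemma perturbation moves this eigenvalue across the unit circle, flipping the sign of the associated Lyapunov exponent and changing the index of $r$ by one in the perturbed diffeomorphism. This contradicts the robust equal-index hypothesis, so on an open dense $\mathcal{V}\subset\mathcal{U}(f)$ no middle bundle can survive, the finest splitting reduces to $E\oplus G$, and $H(p_g)$ is hyperbolic.

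The main obstacle lies in the first step, where both isolation and robust $h$-expansiveness must be used simultaneously. Isolation alone forces the new periodic point $\tilde q$ into $H(p_{\tilde g})$, but the mere coexistence of two indices inside a single homoclinic class is not by itself contradictory; it is the appearance of arbitrarily small horseshoes through the unfolding of the heterodimensional cycle that violates robust $h$-expansiveness, by forcing $h^*_g(\epsilon)>0$ for every $\epsilon>0$. A secondary technical point is ensuring that the Franks'-lemma perturbations can be realised while remaining inside $\mathcal{U}(f)$ and that the continuations $\tilde q$ actually lie in $R(\tilde g)\cap U$; both are handled by shrinking $\mathcal{U}(f)$ at the outset so that the isolation property is preserved throughout. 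Once the extremal hyperbolicity is established, the argument for the second assertion is a routine Franks'-lemma perturbation.
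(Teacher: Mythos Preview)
Your argument for the second assertion (robust equal-index implies hyperbolicity on an open dense set) follows essentially the same Ma\~n\'e/Franks route as the paper's Lemma~\ref{indiceconstante}, and that part is fine.

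The first assertion, however, has a genuine gap. Your plan is: if $E$ fails to contract, produce via Ma\~n\'e and Franks a periodic point $\tilde q\in H(p_{\tilde g})$ of index different from $\mbox{index}(p)$, obtain a heterodimensional cycle, and then ``unfold this cycle, producing arbitrarily small horseshoes'' to contradict robust $h$-expansiveness. But this last implication is precisely what does not follow. Periodic points of different indices, and indeed robust heterodimensional cycles, are perfectly compatible with robust $h$-expansiveness: this is exactly the situation described in the second alternative of Theorem~\ref{solita}, and \cite[Theorem~B]{PaVi} shows that a homoclinic class with a dominated splitting $E^s\oplus F_1\oplus\cdots\oplus F_k\oplus E^u$ (one-dimensional non-hyperbolic centres) is $h$-expansive even when it supports such cycles. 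So the contradiction you are aiming for simply does not arise, and you are left with no mechanism forcing $E$ to contract. (There is also a minor slip: Ma\~n\'e's argument only controls the top exponent along $E$, so a small Franks perturbation pushes \emph{one} eigenvalue across the unit circle and changes the index by $1$, not by $\dim E$.)

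The paper handles extremal hyperbolicity quite differently and, crucially, without invoking $h$-expansiveness at this step. Isolation of $H(p)$ makes it a robustly transitive set, maximal invariant in its isolating neighbourhood; then \cite[Theorem~D]{BDPR} gives directly that the extremal sub-bundles of the finest dominated splitting of any robustly transitive set are hyperbolic (Lemma~\ref{extremos1}). Robust $h$-expansiveness re-enters only later, when analysing the case where indices are not robustly constant (Lemma~\ref{extremos2}, Corollary~\ref{acima}), and there it is used to rule out complex eigenvalues via tangencies, not to rule out heterodimensional cycles themselves.
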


On the other hand, if there are $g$ arbitrarily $C^1$-close to $f$
such that in $H(p_g)$ there are periodic points of different index
then $H(p)$ is approximated by robust heterodimensional cycles,
\cite{BD1}.

\medbreak

If we do not assume that $H(p)$ is isolated but we know that $f$
cannot be approximated by $g$ exhibiting a heterodimensional cycle
we  have the following result:

\begin{maintheorem} \label{prin4}
Let $\Cy(M)=\{f \in \dif^1(M); f \mbox{  has no cycles}\}$, and
 $H(p)$ be as in Definition \ref{robuhagar}
for $r=1$. Assume that $f\in \dif^1(M)\backslash
\overline{\Cy(M)}$. Then for $g$ in a residual subset
$\mathcal{R}\subset \mathcal{U}(f)$, $H(p_g)$ has a dominated
splitting of the form $E^s\oplus E^c\oplus E^u$ where $E^c$ is not
hyperbolic and $\dim(E^c)\leq 2$, $E^s$ is contracting and $E^u$ is
expanding. Moreover, if $\dim(E^c)=2$ then $E^c=E^c_1\oplus E^c_2$
dominated.
\end{maintheorem}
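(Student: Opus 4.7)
The plan is to obtain the splitting $E^s\oplus E^c\oplus E^u$ as a regrouping of the finest dominated splitting on $H(p_g)$ given by Theorem \ref{prin2}, and then to leverage the hypothesis of robust cycles at $f$ to force $\dim E^c\leq 2$.

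First I would apply Theorem \ref{prin2} to every $g\in\mathcal{U}(f)$ (after shrinking the neighborhood so that robust $h$-expansiveness is inherited), producing a finest dominated splitting $E_g\oplus F^g_1\oplus\cdots\oplus F^g_{c(g)}\oplus G_g$ with each $F^g_j$ one-dimensional and non-hyperbolic. Because the summands of the finest splitting are indecomposable, $E_g$ cannot admit a non-hyperbolic one-dimensional sub-bundle (otherwise it would appear among the $F^g_j$), so a Ma\~n\'e / Pliss-type argument along the dense set of periodic orbits of $H(p_g)$ forces $E_g$ to be uniformly contracting; symmetrically $G_g$ is uniformly expanding. Set $E^s:=E_g$, $E^u:=G_g$ and $E^c:=F^g_1\oplus\cdots\oplus F^g_{c(g)}$. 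With this bookkeeping the entire theorem reduces to establishing $c(g)\leq 2$ for $g$ in a residual subset of $\mathcal{U}(f)$.

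Next I would take $\mathcal{R}\subset\mathcal{U}(f)$ to be the intersection of standard $C^1$-residuals: Kupka-Smale; the Bonatti-Crovisier residual on which each chain-recurrence class containing a periodic point coincides with its homoclinic class; the continuity residual for $g\mapsto H(p_g)$; and Abdenur's residual on which the indices of periodic orbits inside a fixed homoclinic class form an interval of integers. The hypothesis $f\notin\overline{\Cy(M)}$ says precisely that $f$ lies in the $C^1$-interior of the diffeomorphisms possessing a cycle, so after further shrinking $\mathcal{U}(f)$ every $g\in\mathcal{U}(f)$ carries one. For $g\in\mathcal{R}$, the Bonatti-Crovisier identification of chain-recurrence and homoclinic classes allows such a cycle to be realised, after an arbitrarily small $C^1$-perturbation staying in $\mathcal{R}$, inside the chain-recurrence class of $p_g$, namely inside $H(p_g)$ itself. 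Therefore $H(p_g)$ contains hyperbolic periodic orbits of at least two distinct indices; Abdenur's result makes this set of indices a nontrivial integer interval, and since the index of $q\in H(p_g)$ equals $\dim E^u+\#\{j:F^g_j(q)\text{ is expanding}\}$, at least one central bundle $F^g_j$ flips its hyperbolic character along orbits of $H(p_g)$.

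Finally I would carry out a contradiction step. Assume $c(g)\geq 3$ for some $g\in\mathcal{R}$. Then $\dim E^c\geq 3$, and the index variation supplied by the cycle, together with this extra central direction, enables a Bonatti-Diaz-Pujals perturbation: one applies the Franks lemma at a well-chosen periodic orbit $q\in H(p_g)$, modifying the derivative only inside $E^c$ (so that $E^s$ and $E^u$ survive) in order to install a cs- or cu-blender involving two independent flipping central directions, while exploiting the remaining third central coordinate to drive the dynamical diameter of the resulting horseshoe to zero. The outcome is, for every $\epsilon>0$, a perturbation $\tilde g\in\mathcal{U}(f)$ and a point $x\in H(p_{\tilde g})$ with $h(\tilde g,\Gamma_\epsilon(x,\tilde g))>0$, so $h^*_{\tilde g}(\epsilon)>0$ for every $\epsilon$ and $\tilde g/H(p_{\tilde g})$ fails to be $h$-expansive, contradicting the robust $h$-expansiveness hypothesis. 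Hence $c(g)\leq 2$, and when $c(g)=2$ the decomposition $E^c=F^g_1\oplus F^g_2$ is dominated by construction. The main difficulty will sit in this last step: converting the abstract index variation produced by the cycle hypothesis into a genuine Franks-type perturbation compatible with the splitting $E^s\oplus E^c\oplus E^u$, using the third central direction to nest a blender-horseshoe whose dynamical diameter can be made arbitrarily small, and verifying that the resulting small horseshoe genuinely lies inside $\Gamma_\epsilon(x,\tilde g)$ for arbitrarily small $\epsilon$ rather than merely contributing positive topological entropy to $H(p_{\tilde g})$ at a fixed scale.
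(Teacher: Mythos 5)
Your reading of the hypothesis is the opposite of the paper's intent, and this sends the whole argument down a different (and unsupported) road. Despite the awkward notation $f\in \dif^1(M)\backslash \overline{\Cy(M)}$, the introduction states explicitly that the assumption is that $f$ \emph{cannot} be approximated by diffeomorphisms exhibiting a heterodimensional cycle, i.e.\ $f$ is far from cycles; this is exactly what is needed to invoke Crovisier's result (Theorem \ref{Crovisier}), which applies on a residual subset of $\dif^1(M)\backslash \overline{\rm{Tang}\cup\rm{Cycles}}$. The paper's proof is then very short: robust $h$-expansiveness of $H(p_g)$ keeps $f$ away from homoclinic tangencies (a tangency inside the class would allow the small-horseshoe construction of Subsection \ref{herradurita}), so $f$ lies in $\dif^1(M)\backslash \overline{\rm{Tang}\cup\rm{Cycles}}$, and Crovisier's theorem directly yields, generically in $\mathcal{U}(f)$, a partially hyperbolic splitting $E^s\oplus E^c_1\oplus E^c_2\oplus E^u$ with each central bundle of dimension $0$ or $1$ -- which is precisely the statement. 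You instead assume every $g$ near $f$ \emph{carries} a cycle and try to exploit the resulting index variation; that is not the theorem's hypothesis, and the conclusion you aim for ($\dim E^c\leq 2$) is not something one should expect under robustly present heterodimensional cycles.

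Beyond the misread hypothesis, two steps of your plan are genuine gaps. First, you claim the extremal bundles of the finest splitting from Theorem \ref{prin2} are uniformly contracting/expanding because they are indecomposable, via a ``Ma\~n\'e/Pliss-type argument''; indecomposability of a summand does not give hyperbolicity, and in the paper this hyperbolicity is obtained only under the extra hypothesis that $H(p)$ is isolated (Lemma \ref{extremos1}, using \cite{BDPR}), or, in the non-isolated case, as part of the partial hyperbolicity furnished by Theorem \ref{Crovisier}. Second, and more seriously, your contradiction step (``$c(g)\geq 3$ plus index variation allows a Franks/blender perturbation producing arbitrarily small horseshoes inside $\Gamma_\epsilon(x)$'') is exactly the part you acknowledge as the main difficulty, and it cannot work as stated: Theorem \ref{prin2} shows that robust $h$-expansiveness is compatible with an arbitrary number $c$ of one-dimensional non-hyperbolic central bundles, so no contradiction with $h$-expansiveness can follow from $c(g)\geq 3$ alone. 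The bound $\dim(E^c)\leq 2$ in Theorem \ref{prin4} is not a consequence of robust $h$-expansiveness; it comes from the far-from-cycles hypothesis through Crovisier's theorem, which your argument never uses in that form.
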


\subsection{Idea of the proofs}

The proofs of Theorems \ref{prin1} and \ref{prin2} go by contradiction:
under the hypothesis that there is not a dominated splitting in $T_{H(p)}M$,
we profit from some ideas of \cite{PV} and \cite{Ro} to create a flat
tangency between $W^s(p)$ and $W^u(p)$.
We remark that in \cite{PV,Ro} for the case that $\dim (M)>2$ it was proved
that if $r\geq 2$ and
$g$ has a homoclinic tangency then there are diffeomorphisms
arbitrarily $C^r$-close to $g$ exhibiting persistent
homoclinic tangencies (thus generalizing results of \cite{Nh1}, see also \cite{Nh2}).
In our case, since we can perform the perturbations in the
$C^1$ topology, our arguments are simplier than theirs to
obtain a $C^2$ diffeomorphism $g$ exhibiting a flat tangency,
and afterward create an arc of tangencies between
$W^s(p)$ and $W^u(p)$.

Next we follow
\cite{DN},  to perform another $C^1$-perturbation with support in a small
neighborhood of the arc of tangencies leading to the
appearance of arbitrarily small horseshoes with positive entropy
contradicting $h$-expansiveness.
Therefore $Df/T_{H(p,f)}M$ admits a dominated spliting.

Moreover, either the finest dominated
splitting (see Definition \ref{finest}) has the form $E\oplus
F_1\oplus\cdots \oplus F_c\oplus G$ where all $F_j$ are one
dimensional and not hyperbolic or again we contradict robustness of
$h$-expansiveness using \cite[Theorem 6.6.8]{Go}.

For the proof of Theorem \ref{prin3} we assume some specific generic
properties described in Section \ref{genericos} and that $H(p)$ is
isolated.  These allow to prove that the extremal sub-bundles $E$
and $G$ are respectively contracting and expanding. Moreover if the
index of periodic points of $H(p_g)$ is robustly the index of $p$
then for an open dense subset of $\mathcal{U}(f)$ the dominated
splitting defined on $T_{H(p)}M$ is hyperbolic. This proof is done in two steps:
(1) First we prove in Lemma \ref{extremos1} that the extremal
sub-bundles are hyperbolic using the fact that $H(p)$ is isolated,
\cite{BDPR}. (2) Second  we show in Lemma \ref{indiceconstante} that if in
a $C^1$-robust way the index of periodic points in $H(p_g)$ are the
same for $g\in \mathcal{U}(f)$ then for an open and dense subset
$\mathcal{U}_1$ of $\mathcal{U}(f)$ we have that $H(p_g)$ is
hyperbolic.


Finally in Theorem \ref{prin4}, where we do not assume that $H(p)$
is isolated, we see, under the generic assumptions described at Section
\ref{genericos}, that for a residual subset $\mathcal{R}\subset
\mathcal{U}(f)$ we have a dominated splitting $E^s\oplus E^c\oplus
E^u$ defined on $T_{H(p)}M$ such that $E^s$ is contracting, $E^u$ is
expanding and $E^c$ is dominated and at most two dimensional. For
this we assume further that $f\in \dif^1(M)\backslash
\overline{\Cy}(M)$ which allows to use \cite[MainTheorem]{Cr}.

\section{Entropy expansiveness implies domination.}
In this section we prove Theorem \ref{prin2} assuming that $f/H(p)$
is robustly $h$-expansive.


Let $H(p)$ be a $f$-homoclinic class associated to the hyperbolic
periodic point $p$. Assume that there is a $C^1$ neighborhood ${\cal
U}$ of $f$ such that for any $g\in {\cal U}$ it holds that there is
a continuation $H(p_g)$ of $H(p)$ such that $H(p_g)$ is
$h$-expansive.

We may  assume that $p$ is a {\em hyperbolic fixed point} since $f/H(p)$ is
$h$-expansive if and only if $f^m/H(p)$ is $h$-expansive. This
follows from the fact that for any compact $f$-invariant set
$\Lambda$ we have that $h(f^m,\Lambda)=m \cdot h(f,\Lambda)$ which implies
that $h(f^m,\Lambda)=0\; \Longleftrightarrow \; h(f,\Lambda)=0$.

 Let $x\in W^s(p)\cap W^u(p)$ be a transverse homoclinic point
 associated to the periodic point $p\,$. We define $E(x)\equiv T_x W^s(p)$
 and $F(x)\equiv T_x W^u(p)$. Since $p$ is hyperbolic we have that
 $E(x)\oplus F(x)=T_x M$. Moreover, $E(x)$ and $F(x)$ are $Df$-invariant,
 i.e.: $Df(E(x))=E(f(x))$ and $Df(F(x))=F(f(x))$.
Denote by $H_t(p)$ the set of the transverse homoclinic points associated to $p$.
Then, it can be proved that $H(p)\equiv \overline{H_t(p)}$.
Here $\overline{A}$ stands for the closure in $M$ of the subset $A\subset M$.
 So if we prove that there is a dominated
 splitting for $H_t(p)$ we are done since we can extend by
 continuity the splitting to the closure $H(p)$.
 Moreover, since $C^2$-diffeomorphisms are dense in the $C^1$-neighbourhood $\mathcal{U}$
 we may assume that $f$ is of class $C^2$ taking into account that we are assuming that $h$-expansiveness
  is $C^1$-robust.

  We will use the following result proved in \cite{Fr}:
   \begin{Lem} \cite[Lemma 1.1]{Fr}\label{Fr}
 Let $M$ be a closed $n$-manifold,
 $f:M\to M$  a $C^1$ diffeomorphism, and  $\mathcal{U}(f)$  a given neighbourhood of $f$.
 Then, there
 exist $\mathcal{U}_0(f)\subset \mathcal{U}(f)$ and
 $\delta>0$ such that if $g\in \mathcal{U}_0(f)$,
  $S=\{p_1,p_2,\ldots p_m\}\subset M$ is a
 finite set, and $L_i,\,i=1,\ldots ,m$
 are linear maps, $L_i:TM_{p_i}\to TM_{f(p_i)}$, satisfying
 $\|L_i-D_{p_i}g\|\leq \delta,\, i=1,\ldots ,m$ then there is $\tilde{g}\in \mathcal{U}(f)$
  satisfying $\tilde{g}(p_i)=g(p_i)$ and $D_{p_i}\tilde{g}=L_i.$
  Moreover, if $U$ is any neighborhood of $S$
 then we may chose $\tilde{g}$ so that
 $\tilde{g}(x)=g(x)$ for all $x\in \{p_1,p_2\ldots p_m\}\cup (M\backslash
 U)$.
 \end{Lem}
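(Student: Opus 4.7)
This is the classical Franks' lemma; the proof is local and amounts to an explicit bump function surgery. The plan is to reduce to the case of a single point $p_i$ and then exhibit a local perturbation adjusting the differential from $D_{p_i}g$ to $L_i$. Because $S$ is finite I would fix pairwise disjoint open balls $B_i\subset U$ around the $p_i$ and build a perturbation supported in each $B_i$ separately, pasting the $m$ local perturbations together in the obvious way. After picking charts that identify $p_i$ and $g(p_i)$ with the origin of $\R^n$, the task reduces to this: given a linear map $T=L_i-D_0 g$ with $\|T\|\leq\delta$, produce a diffeomorphism $\tilde g$ that coincides with $g$ outside a small ball $B(0,r)$, satisfies $\tilde g(0)=g(0)$ and $D_0\tilde g=D_0 g+T$, and is $C^1$-close to $g$ with a bound depending only on $\|T\|$.

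\textbf{The local perturbation.} I would fix once and for all a $C^\infty$ bump $\varphi:\R^n\to[0,1]$ equal to $1$ on a neighbourhood of $0$ and supported in the unit ball, put $K=\sup\|D\varphi\|$, and set $\varphi_r(x)=\varphi(x/r)$ for small $r>0$. The natural candidate in the chart is
\[
\tilde g(x)=g(x)+\varphi_r(x)\,T\,x.
\]
Since $\varphi_r\equiv 1$ near $0$, this gives $\tilde g(0)=g(0)$ and $D_0\tilde g=D_0 g+T=L_i$, while $\tilde g\equiv g$ outside $B(0,r)$; shrinking $r$ places the support of the perturbation inside any prescribed neighbourhood $U$ of $p_i$.

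\textbf{Main obstacle: the uniform $C^1$-estimate.} The hard part will be controlling $\tilde g$ in the $C^1$-norm, because $\|D\varphi_r\|$ is of order $1/r$ and therefore blows up as $r\to 0$. Differentiating the added term produces
\[
D\tilde g(x)-Dg(x)=\varphi_r(x)\,T+Tx\otimes D\varphi_r(x),
\]
and I expect the key observation to be that on the support of $\varphi_r$ one has $\|x\|\leq r$, which exactly cancels the $1/r$ coming from $D\varphi_r$. Granting this, the right-hand side is uniformly bounded by $(1+K)\|T\|\leq(1+K)\delta$ in $r$. I would then take $\mathcal{U}_0(f)$ to be the set of $g\in\mathcal{U}(f)$ whose $C^1$-distance to $\dif^1(M)\setminus\mathcal{U}(f)$ exceeds $(1+K)\delta$, and shrink $\delta$ accordingly, so that the resulting $\tilde g$ lies in $\mathcal{U}(f)$. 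The multi-point statement then follows at once because the local perturbations have pairwise disjoint supports and commute, while the property $\tilde g(x)=g(x)$ for $x\in S\cup(M\setminus U)$ is built into the construction.
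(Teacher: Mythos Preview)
The paper does not give a proof of this lemma at all: immediately after the statement it records only the remark that ``the proof of our statement is contained in \cite{Fr}''. Your argument is precisely the classical bump-function surgery that underlies Franks' original proof, and it is correct in outline and in the key estimate (the cancellation of the $1/r$ from $D\varphi_r$ against the factor $\|x\|\le r$).

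Two small points you may want to make explicit when writing this up. First, you should check that $\tilde g$ is still a diffeomorphism: once $\|D\tilde g-Dg\|\le(1+K)\delta$ is established, invertibility of $D\tilde g$ and injectivity of $\tilde g$ on the small ball follow for $\delta$ small relative to a lower bound on the conorm of $Dg$; this is routine but should be said. Second, the constant $K$ ought to absorb the distortion coming from the fixed local charts at $p_i$ and $g(p_i)$, since the $C^1$-distance on $\dif^1(M)$ is being compared with the Euclidean $C^1$-norm in the chart; a finite atlas fixed in advance handles this uniformly. Neither point changes the structure of your argument.
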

 \begin{Obs}
 The statement given there is slightly different from that above,
 but the proof of our statement is contained in \cite{Fr}.
 \end{Obs}

\subsection{
Existence of dominated splitting: proof of Theorem \ref{prin1}.}
Under the hypothesis of Theorem \ref{prin1}, let us assume that $f$ is of class $C^r$, $r\geq 2$
 and prove that there is a dominated splitting for
 $H_t(p)$ .

The proof goes by contradiction and it is done in several steps:
(1) at Lemma \ref{le:dom} we perform a pertubation $g$ of $f$
exhibting a homoclinic point $x_g\in H(p_g)$ with small angle between
$W^s_{loc}(x_{g},g)$ and $W^u_{loc}(x_{g},g)$,
(2) at Proposition \ref{buenangulo}
we perform another perturbation (that we still denote by $g$)
 of $f$ to create a tangency
between $E^s(x, g)$ and $E^u(x,g)$, $x\in H(p_g)$,
(3)  at Proposition \ref{chato} through another pertubation of
$f$ we create an arc of flat tangencies $\beta \subset H(p_g)$,
(4) finally in Subsection \ref{herradurita} we perform a sequence
of perturbations of $f$ leading to $G$ near $f$ presenting a sequence of
two by two disjoint small horseshoes $H_{\epsilon_n} \subset H(p_G)$,
$\epsilon_n\to 0$ as $n\to \infty$.
Moreover, we can select the sequence $\epsilon_n$ in such a way that
none of then are a constant of $h$-expansiveness of $G$.
Since the entropy of each of these small horseshoes is positive,
we arrive to a contradiction to $h$-expansiveness of $f$.

To start, let us assume, by contradiction, that $H_t(p)$ has no
dominated splitting. Then,  by \cite[\S\ 3.6 Proof of Theorem F]{MPP} it holds
\begin{enumerate}
 \item[(AD)] for all $m\in\Z^+$ there exists $x_m$ such that
for all $0\leq n\leq m$,
$$\|Df^{n}|E(x_m)\|\cdot\|Df^{-n}|F(f^{n}(x_m))\|> 1/2 \; ,$$
\end{enumerate}


\begin{Lem}
\label{le:dom}
Assume that (AD) holds.
Then, given $\gamma>0$ and $\epsilon>0$ there is $m>0$ and $g$ an
$\epsilon$-$C^1$-perturbation of $f$ with a homoclinic point $x_{g}$
associated to $p_{g}$ such that the angle at $x_{g}$ between
$W^s_{loc}(x_{g},g)$ and $W^u_{loc}(x_{g},g)$ is less than $\gamma$.
\end{Lem}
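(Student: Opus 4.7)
The plan is to combine the failure of domination (AD) with Franks' Lemma \ref{Fr}, in the style of the Ma\~n\'e--Pujals--Sambarino linear-algebra argument. The idea is to locate a very long orbit segment $x_m,f(x_m),\ldots,f^m(x_m)$ along which $E$ and $F$ are not dominated, and then to insert a tiny rotation of $Df$ at every point of this segment so that, in the resulting $C^{1}$-small perturbation $g$, the direction $T_{x_m}W^s(p_g)$ has been swung around almost into $T_{x_m}W^u(p_g)$.

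\smallskip

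First, fix $\gamma,\epsilon>0$ and apply Lemma \ref{Fr} to the $\epsilon$-neighbourhood $\mathcal{U}(f)$ to obtain $\delta>0$ and $\mathcal{U}_0(f)$. Pick $m\in\N$ large (to be specified; morally $m\gtrsim \pi/(2\delta)$). Using (AD), together with density of $H_t(p)$ in $H(p)$ and openness of the (AD) inequalities in the base point, choose a transverse homoclinic point $x_m\in H_t(p)$ satisfying
$$\|Df^n|E(x_m)\|\cdot\|Df^{-n}|F(f^n(x_m))\|>\tfrac12,\qquad 0\le n\le m,$$
and whose finite orbit $\mathcal{O}=\{z_n:=f^n(x_m):0\le n\le m-1\}$ is pairwise separated, disjoint from the remaining iterates of $x_m$, and disjoint from a neighbourhood of $p$. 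This gives room to perturb $Df$ independently at each $z_n$ without affecting $p$ or the rest of the orbit of $x_m$.

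\smallskip

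Second, at each $z_n$ choose a rotation $R_n$ of $T_{z_n}M$ of norm at most $\delta$, supported in the $2$-plane $\mathrm{span}(e_n,f_n)$ for unit vectors $e_n\in E(z_n)$ and $f_n\in F(z_n)$, and tilting $e_n$ toward $f_n$. Set $L_n:=D_{z_n}f\circ R_n$ and invoke Lemma \ref{Fr} to produce $g\in\mathcal{U}(f)$ with $D_{z_n}g=L_n$, $g(z_n)=f(z_n)$, and $g=f$ off a small neighbourhood of $\mathcal{O}$. Then $p_g=p$ is still hyperbolic of the same index, the full orbit of $x_m$ under $g$ coincides with that under $f$ (so $x_m$ remains a homoclinic point of $p_g$), and the local invariant manifolds of $p_g$ near $p$ agree with those of $p$. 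Propagating tangent spaces past the perturbation region yields
$$T_{x_m}W^u(p_g)=F(x_m),\qquad T_{x_m}W^s(p_g)=\bigl(Dg^m|_{x_m}\bigr)^{-1}\!\bigl(Df^m|_{x_m}(E(x_m))\bigr),$$
so that the angle we wish to shrink is exactly the angle in $T_{z_m}M$ between $Df^m(E(x_m))$ and $Dg^m(F(x_m))$. The rotations $R_n$ are chosen precisely so that this angle is less than $\gamma$.

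\smallskip

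The main obstacle is the linear-algebra claim that $m$ successive insertions of rotations of size $\delta$ really do accumulate, under the (non-conformal) derivatives $Df|_{z_n}$, into a net rotation of order $\pi/2$. A priori each $\delta$-tilt of $e_n$ could be absorbed at the next step by the hyperbolic bias of $Df|_{z_n}$ toward a preferred direction. Condition (AD) is exactly the assumption that rules this out: the bound $\|Df^n|E\|\cdot\|Df^{-n}|F\|>\tfrac12$ for every $n\le m$ says that on the $2$-plane $\langle e_n,f_n\rangle$ the contraction on $E$ never outpaces that on $F$ by more than a fixed factor over any subsegment, and this is precisely the condition under which a $\delta$-rotation of $e_n$ persists as a comparable rotation of $Df(e_n)$, so the tilts add up over $\sim 1/\delta$ steps. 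This is a by-now standard computation (cf.\ the arguments of \cite{PV} and the proof of \cite[Theorem F]{MPP}) which, once established, reduces the lemma to the bookkeeping sketched in the previous paragraph.
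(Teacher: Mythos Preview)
Your approach is sound in outline but differs from the paper's, and the place where you defer to a ``standard computation'' is exactly where the paper does the work explicitly.

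The paper argues by contradiction (assume a uniform lower bound $\gamma_0$ on the angle for all $g\in\mathcal U_0$) and uses a different perturbation from yours: a single shear $P$ at $x_m$ sending $v_m\in F(x_m)$ to $v_m+\epsilon' w_m$ with $w_m\in E(x_m)$, followed at each subsequent point $f^j(x_m)$ by a dilation $T_j$ that multiplies the $E$-direction by $(1+\epsilon')$ and fixes $F$. The image of $v_m$ under the perturbed cocycle is then computed in closed form as
\[
Dg^m(v_m)=Df^m(v_m)+(1+\epsilon')^m\epsilon'\,Df^m(w_m),
\]
and (AD) gives $\|Df^m w_m\|>\tfrac12\|Df^m v_m\|$, so for $m$ chosen with $\epsilon'(1+\epsilon')^m\ge 4+2/\gamma_0$ the $E$-component dominates. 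A short estimate using \cite[Lemma~II.10]{Ma2} then contradicts the assumed angle bound $\gamma_0$. No rotations and no step-by-step accumulation are involved.

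The advantage of the paper's route is that the exponential factor $(1+\epsilon')^m$ does all the amplification and the only use of (AD) is the single inequality at the terminal time $n=m$. Your rotation scheme, by contrast, needs the tilt introduced at step $k$ to survive the cocycle over steps $k+1,\dots,m$, which requires comparability of $E$- and $F$-expansion over \emph{arbitrary subsegments} $[k,n]$; but (AD) as stated only controls initial segments $[0,n]$. Your sentence ``over any subsegment'' overstates what (AD) directly gives, and this is precisely the nontrivial linear-algebra point you are outsourcing. The rotation argument can be made to work (e.g.\ by first extracting a subsegment with two-sided control, as in the references you cite), but as written the key step is asserted rather than proved, whereas the paper's shear-plus-dilation perturbation makes the computation entirely explicit.
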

\begin{proof}
Arguing by contradiction let us assume that there is $\gamma_0>0$
such that for all $g$ in ${\cal U}_0$  the angle at
$x_{g}$ between $W^s_{loc}(x_{g},g)$ and $W^u_{loc}(x_{g},g)$ is
greater or equal than $\gamma_0$.

By hypothesis there exist vectors $v_m\in F(x_m)$ and $w_m\in
E(x_m)$ with $\|v_m\|=\|w_m\|=1$ such that
$$\frac{\|Df^j(w_m)\|}{\|Df^j(v_m)\|}>\frac{1}{2}, \;\;\;\forall \,j,\, 1\leq j \leq m .$$
Take $\epsilon>0$ small such that any
$C^1$-$\epsilon$-perturbation of $f$  gives a
diffeomorphism $g\in{\cal U}_0$ where ${\cal U}_0$ is the
$C^1$-neighborhood of $f$ where we have $h$-expansiveness. Let
$\epsilon'>0$ be such that any perturbation of the derivatives along
a finite orbit of $f$ can be realized via Lemma \ref{Fr} by a
$C^1$-$\epsilon$-perturbation of $f$.

Let us define $T_j:T_{f^j(x_m)}M\to T_{f^j(x_m)}M$ a linear map
 such that $T_j|_{E(f^j(x_m))}=(1+\epsilon')id$ and
 $T_j|_{F(f^j(x_m))}=id$, $j=0,\ldots ,m$. Note that
 $T_j$ stretches $E=T_{x_m}W^s_\epsilon(x_m,f)$ and left $F=T_{x_m}W^u_\epsilon(x_m,f)$ unchanged.
 Let $P:T_{x_m}M\to T_{x_m}M$ be a linear map satisfying \,\,
 $P=id$ in $E(x_m)$ and $P=id+L$
 in $F(x_m)$ where $L:F(x_m)\to E(x_m)$ is a linear map such that
 $L(v_m)=\epsilon' w_m$ and $\|L\|=\epsilon'$.
 Finally define $G_0=T_1\cdot Df_{x_m},\cdot P$, and $G_j=T_{j+1}\cdot
 Df_{f^j(x_m)}$ for $j=1,\ldots ,m-1$.
 By Lemma \ref{Fr} there
 exists a diffeomorphism $g:M\to M$ such that
 $g$ is $\epsilon$-near $f$, keeps the orbit of $x_m$ unchanged for $j=0,1,\ldots
 ,m$, and such that $Dg_{f^j(x_m)}=G_j$.
 We may assume
 (and do) that the support of the perturbation does not cut a small
 neighborhood of $p$. It follows that $x_m$ continues to be a
 homoclinic point of $g$.
Moreover, we do not change $E(f^j(x_m))$, $j\in \Z$, and
$F(f^j(x_m))$ is changed only for $j\geq 0$. Thus such bundles  are the
stable and unstable directions of a homoclinic point of a
diffeomorphism $g\in {\cal U}_0$. We obtain that $v_m\mapsto
v_m+\epsilon' w_m=u$ and after $m$ iterates we have
$u_m=Dg^m(u)=Dg^m(v_m+\epsilon' w_m)=
Df^m(v_m)+(1+\epsilon')^mDf^m(\epsilon' w_m)$.

Given $\epsilon' >0$ we may find $m>0$ such that $\epsilon'
(1+\epsilon')^m\geq 4+2/\gamma_0$ where $\gamma_0>0$ is, by
hypothesis of absurd, such that $\angle(E(x),F(x))>\gamma_0$ for all
$x\in H_t(p_g)$, $g\in {\cal U}_0$, where $\angle(E(x),F(x))$
stands for the angle between $E(x)$ and $F(x)$. With this choice of
$m$, by \cite[Lemma II.10]{Ma2} we have
$$\|Df^m(v_m)\|=\|u_m -(1+\epsilon')^mDf^m(\epsilon' w_m)\|\geq $$
$$\geq \frac{\gamma_0}{1+\gamma_0}\|u_m\|\geq
\frac{\gamma_0}{1+\gamma_0}\,\big|\, \|\epsilon'
(1+\epsilon')^mDf^m(w_m)\|-\|Df^m(v_m)\|\,\big|\, .$$ Dividing the
inequality $\|Df^m(v_m)\| \geq  \frac{\gamma_0}{1+\gamma_0}\,\big|\,
\|\epsilon' (1+\epsilon')^mDf^m(w_m)\|-\|Df^m(v_m)\|\,\big| $ by
$\frac{\gamma_0}{1+\gamma_0}\|Df^m(v_m)\|$ and taking into account
that by hypothesis
$$\frac{\|Df^m(w_m)\|}{\|Df^m(v_m)\|}>\frac{1}{2}\,\,\mbox{ and }\,\, \epsilon'
(1+\epsilon')^m\geq 4+2/\gamma_0 $$ we find
$$\frac{1+\gamma_0}{\gamma_0} > \frac{\epsilon'
(1+\epsilon')^m}{2}-1 > 1+1/\gamma_0=\frac{1+\gamma_0}{\gamma_0}\, ,$$
arriving to a
contradiction. Hence $\angle(Dg^m(u),w_m) < \gamma$, proving Lemma \ref{le:dom}.

\end{proof}

 Let us recall the following result which may be found in \cite[Lemma 4.16]{BDP}, see also
\cite[Lemma 3.8]{BDPR}.

\begin{Teo} \label{BDP}
Let $p$ be a hyperbolic periodic point and $H(p)$ its homoclinic
class. Assume that $H(p)$ is not trivial. Then there exists and
arbitrarily small $C^1$-perturbation $g$ of $f$ and a hyperbolic
periodic point $q$ of $H(p_g)$ with period $\pi(q)$ and
homoclinically related with $p_g$ such that $Df_q^{\pi(q)}$ has only
positive real eigenvalues of multiplicity one.
\end{Teo}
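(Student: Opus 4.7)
The plan is to exploit the density of periodic points homoclinically related to $p$ inside a non-trivial homoclinic class, and then to adjust the spectrum of their return map by Franks-type perturbations distributed along a long orbit. Since $H(p)$ is non-trivial, $p$ has a transverse homoclinic point, and the Birkhoff--Smale horseshoe theorem produces in $H(p)$ hyperbolic periodic points homoclinically related to $p_g$ whose period can be made as large as we wish. Fix a small $\delta>0$ controlling the size of a $C^1$-perturbation obtained from Lemma \ref{Fr}, and choose such a periodic point $q$ of period $\pi(q)=n$, with $n$ to be fixed later. The return cocycle is the product $A:=Df^n_q$, and the goal is to perturb $Df$ along the orbit of $q$ so that the new product has $d$ simple positive real eigenvalues.

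First I would kill every pair of complex conjugate eigenvalues of $A$. Such a pair corresponds to an $A$-invariant real $2$-plane $V\subset T_qM$ on which $A$ acts as a rotation-dilation of angle $\theta\in(0,\pi)$. Pushing $V$ forward gives planes $V_j=Df^j(V)$ at each $f^j(q)$. Apply Lemma \ref{Fr} at each orbit point, replacing $Df_{f^j(q)}$ by its composition with the rotation of angle $-\theta/n$ in $V_j$; for $n$ large the operator norm of each individual change is below $\delta$. The resulting linearized cocycle has the same invariant plane over $q$ but a zero net rotation angle, so the two eigenvalues on $V$ become real. Iterating this over each complex pair leaves $A$ with only real eigenvalues. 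A negative real eigenvalue is handled in the same way, by spreading a total rotation of angle $\pi$ over the $n$ iterates inside a $2$-plane containing the corresponding eigenline, flipping its sign. A last arbitrarily small generic Franks perturbation at a single orbit point breaks any remaining multiplicities, yielding $d$ distinct positive real eigenvalues.

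The main obstacle is that the perturbations must preserve both the hyperbolicity of $q$ and its homoclinic relation with $p_g$. I would localize every Franks modification in pairwise-disjoint neighborhoods of the orbit points $f^j(q)$, chosen away from $p$ and from the compact arcs inside $W^s(p)\cap W^u(q)$ and $W^u(p)\cap W^s(q)$ that realise the homoclinic relation. Transverse intersections and hyperbolicity of periodic points are $C^1$-open, and the moduli of the eigenvalues of $A$ shift by at most $O(\delta)$; provided that none of those moduli originally lies on the unit circle (which one may arrange by a preliminary $C^1$-small Franks perturbation spreading any unit-modulus eigenvalue off $S^1$), the perturbed $q$ remains hyperbolic with continuation $q_g$, homoclinically related to $p_g$. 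Letting $\delta\to 0$ produces arbitrarily $C^1$-close $g$ with the required property, which is exactly the conclusion of Theorem \ref{BDP}.
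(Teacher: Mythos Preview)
The paper does not prove this statement at all: it is quoted as a known result, with the line ``Let us recall the following result which may be found in \cite[Lemma 4.16]{BDP}, see also \cite[Lemma 3.8]{BDPR}.'' So there is no proof in the paper to compare against; your sketch is essentially a reconstruction of the argument in those references, namely the Bonatti--D\'{\i}az--Pujals technique of spreading a Franks perturbation along a long periodic orbit to adjust the spectrum of the return map.

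Your outline is on the right track and matches the standard proof, but one step is imprecise. A rotation by $\pi$ in a $2$-plane has determinant $+1$ and flips the sign of \emph{both} eigenvalues in that plane, not just one; so a single negative real eigenvalue cannot be made positive this way without creating another negative one. The correct statement is that negative eigenvalues must be treated in pairs, each pair sitting in an invariant $2$-plane. This is harmless when there is an even number of them, which is automatic if the Jacobian of $f$ is everywhere positive, and otherwise is arranged by choosing the periodic point $q$ to have even period (the horseshoe furnishes periodic points of all large periods). With that adjustment, and with a slightly more careful choice of frames in the pushed-forward planes $V_j$ so that the distributed rotations really compose to cancel the total argument $\theta$, your argument goes through and coincides with the proof in \cite{BDP}.
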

Observe that in the previous result, since $q_g\in H(p_g)$, we have
$H(p_g)=H(q_g)$. So, to simplify notation, we may assume directly that
$p=q$ and moreover  that $g=f$,
and that $p$ is a fixed point.
We order the
eigenvalues of $Df_p$  labeling them as $0<\lambda_k<\cdots
<\lambda_1<1<\mu_1<\cdots <\mu_{d-k}$ so that the less contracting
and the less expanding ones are
respectively $\lambda_1$ and $\mu_1$. \\
By a small $C^1$-preturbations we may also assume that locally, in a
neighborhood $V$ of $p$, we have linearizing coordinates so that
$$f(x)=\sum_{j=1}^k \lambda_j a_j u_j \, +\, \sum_{j=1}^{d-k} \mu_j
a_{k+j}u_{k+j}$$ where we write  $x=\sum_{j=1}^d a_j u_j$ for $x\in V$	. \vspace*{2mm}

The lines in $W^s_{loc}(p)/V$ corresponding to the eigenvalues
$\lambda_j$ may be extended to all of $W^s(p)$ by backward iteration
by $f$ giving us a foliation by lines of dimension $k$. Similarly for $W^u(p)$ we
have a $(d-k)$-foliation by lines obtained by forward iteration by $f$.

Now, let us assume that $g$ is near $f$, $f=g$ in a
small neighborhood of $p$ and  that there is a small angle
between $T_x W^s(p,g)$ and $T_x W^u(p,g)$ where $x$ is a
$g$-homoclinic point associated to $p$. That is: there is $\gamma$ small such that
 $$\angle(T_x W^s(p,g),T_x W^u(p,g))<\gamma \, . $$
By Theorem \ref{BDP}, we may assume that all the eigenvalues
of $Df_{p}^{\pi_p}$ are
 positive with multiplicity one and that we have linearizing
 coordinates in a small neighborhood of $p$.


The next proposition stablishes that if the angle between
$T_x W^s(p,g)$ and $T_x W^u(p,g))$ is small than we can create a tangency
between $T_x W^s(p,\tilde{g})$ and $T_x W^u(p,\tilde{g}))$, for some $\tilde{g}$
near $g$.

\begin{Prop} \label{buenangulo}
There is $\gamma>0$ and $\mathcal{U}_0(g)\subset\mathcal{U}(f)$
so that for some $\tilde g\in\mathcal{U}_0(g)$ there is a tangency
between $E^s(x,\tilde g)$ and $E^u(x,\tilde g)$ if
$\angle(E^s(x,g),E^u(x,g))<\gamma$. Moreover $x$ is a homoclinic
point of $\tilde g$, $E^s(x,\tilde g)\oplus E^u(x,\tilde
g)$ has dimension $d-1$ and there is $N>0$ so that if $<u>$ is
the subspace common to $E^s(x,\tilde g)$ and $E^u(x,\tilde g)$ then
$(D\tilde{g})^N(<u>)$ is tangent to the line
corresponding to the less contracting eigenvalue and
$(D\tilde{g})^{-N}(<u>)$ is tangent to the line corresponding to the
less expanding eigenvalue of $D_p\tilde{g}$.
\end{Prop}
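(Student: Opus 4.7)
The plan is to realize $\tilde g$ via the Franks perturbation lemma (Lemma \ref{Fr}), producing a $C^1$-small modification of $g$ supported at finitely many orbit points of $x$ lying outside the linearizing neighborhood $V$ of $p$, and agreeing with $g$ on the orbit of $x$ and on a neighborhood of $p$. This automatically keeps $x$ a homoclinic point of $\tilde g$ and preserves both the local linearization at $p$ and the eigendirections $u_1,\dots,u_d$ of $D\tilde g_p$. First I would fix $N$ so large that $g^N(x)\in W^s_{loc}(p)\cap V$ and $g^{-N}(x)\in W^u_{loc}(p)\cap V$ lie deep inside $V$; in the linearized coordinates there, the $\lambda_1$-eigendirection at $g^N(x)$ defines a distinguished line $\ell_+$, and the $\mu_1$-eigendirection at $g^{-N}(x)$ defines $\ell_-$. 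These will be the target directions demanded by the moreover statement.

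The tangency is produced by a rotational perturbation closing the smallest principal angle $\phi_1\leq\angle(E^s(x,g),E^u(x,g))<\gamma$ between the two bundles. Rotating $E^u(x,g)$ by angle $\phi_1$ in the 2-plane that realizes this principal angle identifies one vector of $E^u$ with a vector of $E^s$, producing a common $1$-dimensional subspace $\langle u\rangle$ and, generically, no larger intersection, so that $\dim(E^s(x,\tilde g)+E^u(x,\tilde g))=d-1$. By Lemma \ref{Fr} this is realized by a linear perturbation of $Dg$ at a single orbit point $y$ outside $V$, of norm $O(\gamma)$ amplified by the bounded derivative product along the finite orbit segment from $y$ to $x$; choosing $\gamma$ small guarantees this perturbation is within the Franks allowance $\delta$, hence $\tilde g\in \mathcal{U}_0(g)$. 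To match the moreover conclusions $D\tilde g^N(\langle u\rangle)=\ell_+$ and $D\tilde g^{-N}(\langle u\rangle)=\ell_-$, I would add two further small perturbations at orbit points $y_1=g^{j_1}(x)$ and $y_2=g^{-j_2}(x)$ with $0<j_1,j_2<N$, both outside $V$: these are chosen to steer the pullbacks $D\tilde g^{-N}(\ell_+)$ inside $E^s(x,\tilde g)$ and $D\tilde g^{N}(\ell_-)$ inside $E^u(x,\tilde g)$ until they coincide with the tangency line $\langle u\rangle$. The required within-subspace rotations are again of order $\gamma$ once the principal angle has been closed, hence equally small.

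The main obstacle I expect is the coordination of the three perturbations, since each affects the conditions imposed on the others. Concretely, one must show that the linear map from Franks-type derivative perturbations at $y,y_1,y_2$ into the product of Grassmannian tangent spaces controlling (i) the principal-angle direction at $x$, (ii) the chosen line inside $E^s(x,\tilde g)$, and (iii) the chosen line inside $E^u(x,\tilde g)$, is surjective at leading order in $\gamma$, so that the coupled linear system admits a solution of size $O(\gamma)$. This reduces to an independence statement for the derivative cocycle along the finite orbit segment $g^{-N}(x),\dots,g^N(x)$, which follows from the simplicity and separation of the eigenvalues $\lambda_j,\mu_j$ of $Dg_p$ provided by Theorem \ref{BDP} together with generic choices of $j_1,j_2$. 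Taking $\gamma$ small in terms of $\delta$ and the derivative norms on the finite segment then produces $\tilde g\in\mathcal{U}_0(g)$ with all the announced properties.
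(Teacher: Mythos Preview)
Your construction of the tangency itself is essentially the same as the paper's: a single Franks perturbation of size $O(\gamma)$ at one orbit point outside $V$ closes the smallest principal angle and produces a one-dimensional intersection $\langle u\rangle\subset E^s(x,\tilde g)\cap E^u(x,\tilde g)$. That part is fine.

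The gap is in the ``moreover'' step. You fix $N$ in advance and then propose to \emph{steer} $\langle u\rangle$ onto the pullbacks $D\tilde g^{-N}(\ell_+)\subset E^s(x)$ and $D\tilde g^{N}(\ell_-)\subset E^u(x)$ by within-subspace rotations at points $y_1,y_2$ outside $V$, claiming these rotations are again of order $\gamma$. But there is no reason for that: the line $\langle u\rangle$ is determined by the principal-angle geometry of $E^s(x,g)$ and $E^u(x,g)$, while $D\tilde g^{-N}(\ell_+)$ is the (pullback of the) dominant contracting eigendirection; the angle between these two lines inside $E^s(x)$ is in general of order one, not of order $\gamma$. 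A Franks perturbation realizing such a rotation at a single point would then have norm of order one times $\|Dg\|$, which need not be below the Franks threshold $\delta$. Your proposed surjectivity argument does not rescue this, because surjectivity only tells you a solution exists, not that it is small.

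The paper avoids this entirely by reversing the order: first create the tangency, and \emph{then} choose $N$ large. Since all eigenvalues of $Dg_p$ are real, positive, and simple (Theorem~\ref{BDP}) and the orbit enters the linearizing neighborhood, forward iteration of any vector $u\in E^s(x)$ with nonzero $\lambda_1$-component is automatically driven toward the $\lambda_1$-line: $D\tilde g^{N}(u)$ makes an angle $O((\lambda_2/\lambda_1)^{N})$ with $\ell_+$. Thus for $N$ large a \emph{tiny} Franks correction (at a point outside a neighborhood of $p$) suffices to make the alignment exact, and similarly for the backward direction and $\ell_-$. This decouples the three perturbations completely, so no coordination or coupled-system argument is needed. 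Replacing your steering step by this dynamical alignment closes the gap.
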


\begin{proof}
Let $\mathcal{U}(f)$, $\mathcal{U}_0(f)$ and $\delta$ be as in Lemma
\ref{Fr}. Shrinking $\mathcal{U}_0$ if it were necessary we may
assume that $\mbox{clos}\mathcal{U}_0(f)\subset \mathcal{U}(f)$.
Hence we may assume without loss of generality that there is some
$C>0$ such that $\sup\{\|D_xg\|:g\in\mathcal{U}_0(f)\}\le C.$

By hypothesis there is $g\in\mathcal{U}_0(f)$, $x\in
W^s(p_g,g)\trans W^u(p_g,g)$ and $\gamma>0$ small
so that
$$\angle(E^s(x,g),E^u(x,g)) < \gamma.$$

 Taking
$\gamma <\delta/C$ , since $\angle(E^s(x,g),E^u(x,g))<\gamma$, there
exist $v\in E^{s\bot}$ and $u\in E^s$ such that $v+u\in E^u,
\|u\|=1, \|v\|<\gamma.$ Let $T:T_x M\to T_x M$ be such that $T_{\big
|E^{s\bot}}=0,\,\,\, T(u)=-v$ and $\|T\|<\delta/C.$ Let
$L:T_{g^{-1}(x)}M\to T_xM$ be defined by $L=(Id+T)\circ
D_{g^{-1}(x)}g.$ Then we have
$$\|L-D_{g^{-1}(x)}g\|<\delta,\quad \mbox{and}\quad
u\in L(E^u(g^{-1}(x)).$$

 Take a neighborhood $U$ of $g^{-1}(x)$
such that $\mathcal{O}_g(x)\cap U=\{g^{-1}(x)\}$. Using Lemma
\ref{Fr} we find $\tilde{g}\in\mathcal{U}(f)$ such that
$g^j(x)=\tilde{g}^j(x)\;\mbox{for all }j\,, \;\tilde{g}=g$ outside
$U$, and $D_{g^{-1}(x)}\tilde{g}=L.$ Hence $x\in
W^s(p_{\tilde{g}},\tilde{g})\cap W^u(p_{\tilde{g}},\tilde{g})$ since
its forward and backward orbits continue to converge to
$p_{\tilde{g}}$. Moreover $u\in E^s(x,\tilde{g})\cap
E^u(x,\tilde{g})$ and so the intersection of $W^s(p_{\tilde{g}})$
and $W^u(p_{\tilde{g}})$ is not transverse at the point $x$.

Since the eigenvalues of $Df_p$ are all real positive and of
multiplicity one and $f=g$ in a small neighborhood of $p$, by $N$
forward iterations we have a vector $D^{N}\tilde{g}(u)$ almost
tangent to the straight line $<v_1>$ corresponding to the less
contracting eigenvalue at $p$. Again by Lemma \ref{Fr} we can
perturb $\tilde g$ outside a small neighborhood of $p$ to let the
direction of $(D\tilde{g})^{N}(u)$ coincide with $<v_1>$.
Similarly we obtain $(D\tilde{g})^{-N}(u)$
 tangent to the line corresponding to the less expanding eigenvector of
$D\tilde g_p$.
\end{proof}

From  Proposition \ref{buenangulo} we may assume for $f$ itself
that there is a homoclinic point of tangency $x\in W^s(p)\cap W^u(p)$
 with properties analogous to those of $\tilde g$.
The next lemma asserts that under these hypothesis, we can obtain
an arc $\beta$ of non-tranversal homoclinic points in $W^s(p)\cap W^u(p)$.

\begin{Prop}
 \label{chato}
 Let $p$ be a hyperbolic fixed point for $f$ of index $k$ and
 $x\in W^s(p)\cap W^u(p)$ such that the intersection at $x$ is not
 transversal. Then by an arbitrarily small $C^1$-perturbation we
 may obtain a diffeomorphism $g$ with $x\in W^s(p_g,g)\cap
 W^u(p_g,g)$ such that the intersection at $x$ is flat, there exists a small arc $\beta$
 contained in the intersection
 of the stable and unstable manifolds of $p$.
Moreover, there is $N>0$ such that $g^N(\beta)\subset W^s_{loc}(p,g)$ is
tangent to the eigenvector corresponding to the less contracting eigenvalue
and analogously $g^{-N}(\beta)\subset W^u_{loc}(p,g)$ is tangent to the
eigenvector corresponding to the less expanding eigenvalue.
 \end{Prop}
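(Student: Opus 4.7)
The plan is to use a single $C^1$-small perturbation supported near the deep forward iterate $y:=f^N(x)\in W^s_{loc}(p)$ to straighten the existing non-transverse intersection into a flat arc, and then to pull the resulting arc back to $x$. The key point is that, thanks to Proposition \ref{buenangulo}, the common tangent direction at $y$ already lies \emph{inside} the stable subspace, so only a second-order transverse correction has to be killed; this is exactly the margin that allows the straightening to be performed $C^1$-small.

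First I set the stage. Work in the linearizing coordinates at $p$ (available after the reduction supplied by Theorem \ref{BDP}), and choose $N$ so large that both $y=f^N(x)$ and $z=f^{-N}(x)$ sit deep inside the linearizing chart, with $y\in W^s_{loc}(p)$ and $z\in W^u_{loc}(p)$. Let $v_1,w_1$ denote the eigendirections of $Df_p$ for the less contracting and less expanding eigenvalues, and let $\langle u\rangle=E^s(x)\cap E^u(x)$ be the common direction produced by Proposition \ref{buenangulo}, so that $Df^N_x(u)$ is parallel to $v_1$ at $y$ and $Df^{-N}_x(u)$ is parallel to $w_1$ at $z$. Because $v_1\in T_yW^u(p)\cap T_yW^s_{loc}(p)$, there is a $C^1$ arc $\alpha:(-\eta,\eta)\to W^u(p)$ with $\alpha(0)=y$ and $\alpha'(0)=v_1$; writing points as $(\xi^s,\xi^u)$ in the chart, the $W^u_{loc}$-component of $\alpha(t)$ is of order $t^2$.

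The main step is to construct a bump diffeomorphism $\phi:M\to M$ supported in a small neighbourhood $U$ of $y$, chosen disjoint from $p$, from $x$, and from the rest of the orbit of $x$, such that $\phi(y)=y$, $D\phi_y=\operatorname{id}$, and $\phi\circ\alpha$ lies in $W^s_{loc}(p)$ on $|t|\le\eta/2$. Since the transverse displacement of $\alpha$ from $W^s_{loc}(p)$ is $O(\eta^2)$ over an arc of length $O(\eta)$, a standard cutoff construction produces $\phi$ with $\|\phi-\operatorname{id}\|_{C^0}=O(\eta^2)$ and $\|D\phi-\operatorname{id}\|_{C^0}=O(\eta)$, both as small as we wish after shrinking $\eta$. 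Define $g:=\phi\circ f\circ\phi^{-1}$; this is $C^1$-close to $f$, agrees with $f$ off $U\cup f^{-1}(U)$, still fixes $p$ with the same eigenvalues, eigendirections and local invariant manifolds, and satisfies $W^u(p,g)=\phi(W^u(p,f))$. Consequently $\beta^{+}:=\phi(\alpha|_{[-\eta/2,\eta/2]})$ is a genuine flat arc contained in $W^u(p,g)\cap W^s_{loc}(p,g)$ and tangent to $v_1$ at $y$.

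To finish, since $U$ is disjoint from the orbit of $x$ except at $y$, the $g$- and $f$-orbits of $x$ coincide and the derivatives along them are unchanged outside $y$. Setting $\beta:=g^{-N}(\beta^{+})$ therefore gives an arc through $x$ contained in $W^s(p,g)\cap W^u(p,g)$, with $g^N(\beta)=\beta^{+}\subset W^s_{loc}(p,g)$ tangent to $v_1$. The further iterate $g^{-N}(\beta)=g^{-2N}(\beta^{+})$ is an arc through $z=g^{-2N}(y)$; for $\eta$ small and $N$ large it remains in $W^u_{loc}(p,g)$, and its tangent at $z$ is $Dg^{-2N}_y(v_1)=Df^{-N}_x(u)$, which is parallel to $w_1$ by Proposition \ref{buenangulo}. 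The only delicate point in the argument is controlling the $C^1$-norm of the straightening diffeomorphism $\phi$; the second-order vanishing of the transverse deviation of $\alpha$, which is precisely what Proposition \ref{buenangulo} buys us, is the reason this is feasible in the $C^1$ topology.
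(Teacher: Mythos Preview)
Your proposal has a genuine error: defining $g:=\phi\circ f\circ\phi^{-1}$ is the wrong operation. A conjugation carries \emph{both} invariant manifolds by the same diffeomorphism, so
\[
W^s(p,g)\cap W^u(p,g)=\phi\bigl(W^s(p,f)\bigr)\cap\phi\bigl(W^u(p,f)\bigr)=\phi\bigl(W^s(p,f)\cap W^u(p,f)\bigr),
\]
and hence the intersection for $g$ is diffeomorphic to that for $f$: an isolated tangency remains an isolated tangency, never a flat arc. Concretely, you construct $\phi$ so that $\phi(\alpha)\subset W^s_{loc}(p,f)$, but the stable manifold of $g$ near $y$ is $\phi(W^s_{loc}(p,f))$, not $W^s_{loc}(p,f)$; your claimed inclusion $\beta^{+}\subset W^s_{loc}(p,g)$ would force $\alpha\subset W^s_{loc}(p,f)$, which is exactly what you are trying to manufacture. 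If you try to rescue this by asking $\phi$ to \emph{also} preserve $W^s_{loc}(p,f)$, the mean value theorem applied to the $\xi^u$-component of $\phi$ along the segment from $(\alpha^s(t),0)$ to $(\alpha^s(t),\alpha^u(t))$ forces $\partial_{\xi^u}\phi^u=0$ somewhere on that segment, so $\|D\phi-\mathrm{id}\|\ge 1$ and $\phi$ cannot be $C^1$-small.

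The paper avoids this by taking $g=h\circ f$ (a one-sided composition, not a conjugation) with $h$ supported in a ball $B(x,\epsilon')$ meeting the orbit of $x$ only at $x$. Then the \emph{local stable} manifold at $x$ is literally unchanged, $W^s_\epsilon(x,g)=W^s_\epsilon(x,f)$, because forward $g$-orbits of nearby points leave the support of $h$ immediately and thereafter coincide with $f$-orbits; only the \emph{unstable} manifold is deformed. Writing $W^u_\epsilon(x)$ locally as a graph $Y_2=\Gamma(Y_1)$ over $W^s_\epsilon(x)$ with $\partial\Gamma/\partial y_1(0)=0$, the explicit shear $h(Y_1,Y_2)=(Y_1,\,Y_2-G(Y_1,Y_2)\Gamma(y_1,0,\dots,0))$ pushes the curve $(y_1,0,\dots,0,\Gamma(y_1,0,\dots,0))$ onto $D_x\subset W^s_\epsilon(x,g)$, producing the flat arc $\beta$. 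Your $O(\eta^2)$ estimate for the transverse deviation is the right ingredient; what must change is the way you use it---compose rather than conjugate, so that the stable leaf stays fixed while the unstable leaf is pushed onto it.
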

\begin{proof}
Since $p$ is a  hyperbolic saddle, $W^s(p)$ is an Euclidean $k$-dimensional hyperplane and
$W^u(p)$ an Euclidean $(d-k)$-dimensional hyperplane both immersed
in $M$.
 If the intersection at $x$ of $W^s(p)$ and
$W^u(p)$ is not transversal we should have a vector $u\neq 0$ in
$T_xW^u(p) \cap T_xW^s(p)$, i.e.: we have a tangency between
$W^s(p)$ and $W^u(p)$ at the homoclinic point $x$. Using Lemma
\ref{Fr} we may assume that the subspace generated by $u$ is the
unique in common between $T_xW^u(p)$ and $T_xW^s(p)$, that is
$T_xW^u(p) + T_xW^s(p)$ has dimension $d-1$. Moreover, we also may
assume that $k\geq d-k$ (otherwise we may take $f^{-1}$ instead of
$f$) and, again by Lemma \ref{Fr}, that  the tangent space
$T_xW^u_\epsilon(x)$ intersects trivially
$(T_xW^s_\epsilon(x))^\bot$ the orthogonal complement of
$T_xW^s_\epsilon(x)$. Under these assumptions the orthogonal
projection of $W^u_\epsilon(x)$ into $W^s_\epsilon(x)$ is locally a
diffeomorphism in a suitable neighborhood of $x$. Let us choose
$D_x\subset W^s_{\epsilon}(x)$ a small disk and $N>0$ such that
$f^N(D_x)\subset W^s_{\epsilon}(p)$, and let $L_x$ be a small disk
in $W^u_{\epsilon}(x)$ such that $f^{-N}(L_x)\subset
W^u_{\epsilon}(p)$. $L_x$ projects  onto $L'_x\subset D_x$
diffeomorphicaly.
 Via a local coordinate map we may identify $D_x$ with
$$\{y\in\R^d\,/\,y_{k+1}=\cdots =y_d=0;\, y_1^2+\cdots
+y_k^2=1\}\, ,$$ with $x$ identified with the origin $0$ and $u$
having the direction of $Oy_1$ which is tangent at 0 to $L'_x$ too.
$L_x$ may be viewed as the graph of a map $\Gamma:L'_x\to
(T_xW^s_\epsilon(x))^\bot$ with $\frac{\partial\Gamma}{\partial
y_1}\,|_{ 0}=0$. To simplify notation we write $(y_1,\ldots
,y_k)=Y_1$ and $(y_{k+1},\ldots ,y_d)=Y_2$. Hence if $(Y_1,Y_2)\in
L_x$ then $Y_2=\Gamma(Y_1(Z))$, where, given $L'_x$, $Y_1(Z)$ is a
local coordinate map from a
neighborhood of 0 in $\R^{d-k}$ to $D_x$.
\begin{claim}
\label{claim30}
There exists a  $C^1$ perturbation of $f$ that
produces a diffeomorphism $g\in {\cal U}(f)$ with a flat
intersection at $x\in D_x\cap L_x$, with $D_x \subset W^s_\epsilon(x)$ and
$L_x\subset W^u_\epsilon(x)$. This flat intersection contains a small arc $\beta$.
\end{claim}
\begin{proof}
Define $h:M\to M$ by
$$h(Y_1,Y_2)=(Y_1,Y_2-G(Y_1,Y_2)\Gamma(y_1,0\ldots ,0)).$$
Here $G$ is a $C^\infty$-bump function , $0\leq G(Y_1,Y_2)\leq 1$,
that vanishes in the boundary of the ball $B(0,\epsilon')$,
is equal to 1 in $B(0,\epsilon'/4)$, and such that $\|\nabla G\|< \frac{2}{\epsilon'}$, where $\nabla$ means the gradient.

Let us see that $h$ is a
diffeomorphism $\epsilon'$-$C^1$-close to the identity.
\begin{enumerate}
\item[(a)] $h$ is injective: Indeed,
$h(Y_1,Y_2)=h(Y'_1,Y'_2)$ implies that $Y_1=Y'_1$. Hence
$$Y_2-G(Y_1,Y_2)\Gamma(y_1,0\ldots,0)=
Y'_2-G(Y_1,Y'_2)\Gamma(y_1,0\ldots,0).$$ Therefore
$$\|Y_2-Y'_2\|=\|(G(Y_1,Y_2)-G(Y_1,Y'_2))\Gamma(y_1,0,\ldots,0)\|
\leq \|\Gamma(y_1,0,\ldots,0)\|,$$ where we have used that $0\leq
G(Z_1,Z_2)\leq 1$ for all $(Z_1,Z_2)$. Taking into account that
$$\Gamma(0,0)=0,\,\, \frac{\partial\Gamma}{\partial
y_1}\,\Big|_{ 0}=0 $$ we obtain that
$\Gamma(y_1,0\ldots,0)=o(\epsilon')$. Therefore
$$|(G(Y_1,Y_2)-G(Y_1,Y'_2))|=<\nabla G(Y_1,\Theta_2),Y_2-Y'_2>\leq
\|\nabla G\|\|\Gamma(y_1,0\ldots,0)\|<
\frac{2}{\epsilon'}o(\epsilon').$$ Here $(Y_1,\Theta_2)$ is a point
in the segment joining $(Y_1,Y_2)$ with $(Y_1,Y'_2)$.
 Let us choose $\epsilon'>0$ so small that
$\frac{2}{\epsilon'}\cdot o(\epsilon')<\frac{1}{2}$.
It follows that
$$\|Y_2-Y'_2\|=\|(G(Y_1,Y_2)-G(Y_1,Y'_2))\Gamma(y_1,0,\ldots,0)\|
\leq \frac{1}{2} \|\Gamma(y_1,0,\ldots,0)\|.$$ By induction we have
that for all $n\in\N$
$$\|Y_2-Y'_2\|=\|(G(Y_1,Y_2)-G(Y_1,Y'_2))\Gamma(y_1,0,\ldots,0)\|
\leq \frac{1}{2^n} \|\Gamma(y_1,0,\ldots,0)\|.$$
Therefore $Y_2=Y'_2$ and $h$ is injective.

\item[(b)]$h$ is a diffeomorphism:
 Indeed, we have
$$Dh=\left(\begin{array}{ccc}
 Id        & \vdots &   0    \\
 \hdots    & \vdots & \hdots \\
 -G\frac{\partial \Gamma}{\partial y_1}^t-
 \Gamma^t\frac{\partial G}{\partial Y_1} & \vdots    &
 Id-\Gamma^t\frac{\partial G}{\partial Y_2}      \\

\end{array}\right)
$$

Here $\Gamma=\Gamma(y_1,0\ldots,0)$, analogously $\frac{\partial
\Gamma}{\partial y_1}$  only  depends on $y_1$, and $\Gamma^t$ is the
transpose of $\Gamma$. As $\frac{\partial\Gamma}{\partial y_1}\,|_{
0}=0 $ we have that $-G\frac{\partial \Gamma}{\partial y_1}^t$ is
small if $\epsilon'$ is sufficiently small and the same is true with
respect to $\Gamma^t\frac{\partial G}{\partial Y_1}$ and
$\Gamma^t\frac{\partial G}{\partial Y_2}$, taking into account that
$\Gamma(y_1,0,\ldots,0)=o(\epsilon')$ and
$\|\nabla G\|<\frac{2}{\epsilon'}$.
Thus $Dh$ is invertible.
\end{enumerate}

 Items (a) and (b) above prove that $h$ is a diffeomorphism as $C^1$-close
to the identity map as we wish and $h=id$ off a small ball
$B(x,\epsilon')$.
Now consider $g=h\circ f$. Then $g$ is a small pertubation of $f$.
\begin{claim}
 \label{arcobeta}
$x$ is a flat $g$-homoclinic point and there is an arc
$\beta \subset W^s(p,g)\cap W^u(p,g)$ with $x \in \beta$.
\end{claim}

 Indeed, since $x\in W^s(p,f)\cap W^u(p,f)$ we have that $\lim_{n\to
+\infty}f^n(x)=\lim_{n\to -\infty}f^n(x)=p$ and so $x$ is neither
forward recurrent nor backward recurrent. This implies that we may
choose the support, $B(x,\epsilon')$, of the perturbation in such a
way that for $n\neq 0$, $g^n(B(x,\epsilon'))\cap
B(x,\epsilon')=\emptyset$.
Hence if $y\in W^s_\epsilon(x,f)$ then
for $\epsilon>0$ small we obtain that $y\in W^s_\epsilon(x,g)$.
But
$h$ sends and arc $\beta$ passing through $x$ in $W^u_\epsilon(x,f)$
onto an arc $\gamma$ included in
$W^s_\epsilon(x,f)=W^s_\epsilon(x,g)$ and passing through $x$ too.
Therefore $g^{-1}=f^{-1}\circ h^{-1}$ sends the arc $ \gamma$ into
$\beta$ which iterated sucessively by $f^{-1}$ converges to $p$.
Hence
$\beta$  is an arc contained in {\em both} the local stable and unstable
manifold of $x$ which is contained in $W^s(p,g)\cap W^u(p,g)$. Thus
$\beta$ is an arc of flat intersection between
 $W^s(p,g)$ and $W^u(p,g)$. This finishes both the proofs of Claim \ref{arcobeta}
and Claim \ref{claim30}.
\end{proof}

It is not difficult to see that this perturbation $g$ may be done in
such a way that for $N>0$ great enough $g^N(\beta)\subset
W^s_{loc}(p,g)$ is tangent to the eigenvector corresponding to the
less contracting eigenvalue and analogously $g^{-N}(\beta)\subset
W^u_{loc}(p,g)$ is tangent to the eigenvector corresponding to the
less expanding eigenvalue.

All together finishes the proof Proposition \ref{chato}.

\end{proof}


\subsubsection{Creating small horseshoes.} \label{herradurita}
The previous result gives  a diffeomorphism
$g$, $C^1$-near $f$,  such that the intersection between $W^u(p,g)$
and $W^s(p,g)$, in a local chart around $x$ such that $T_xW^s_\epsilon(x)\cap
T_xW^u_\epsilon(x)=<u>$, contains a segment
$\beta=\{su: -\delta\le s\le \delta\}$. Moreover,  $Dg^N u$ is tangent to
the line corresponding to the less contracting eigenvector of
$D g_p$ and $Dg^{-N} u$ is tangent to the
line corresponding to the less expanding eigenvector of $D g_p$.

Next we shall do a  perturbation of $g$, which will give a diffeomorphism $G$
such that $G$ coincides with $g$ outside a small neighborhood
of $\beta$, similar
to those of \cite[Lemma 5.1, Lemma 6.3]{DN} in order to create a sequence of
small horseshoes $H_n\subset H(p,G)$ associated to
$W^s_{loc}(x,G)$ and $W^u_{loc}(x,G)$.
These horseshoes will have positive topological entropy and will be built in such a way that
 neither $\epsilon>0$, nor $\epsilon/2,\,\epsilon/4,\,\ldots,\, \epsilon/2^n,\ldots $ will be constants of
$h$-expansiveness for $H(p,G)$.
Therefore the
diffeomorphism $G$ is not $h$-expansive, contradicting our hypothesis.

To do so we proceed as follows:
first, since we are working in a $C^1$-neighborhood of $f$ and $C^r,\, r\geq 2,$ diffeomorphisms are dense in $\dif^1(M)$ we may assume
that $g$, the diffeomorphism obtained at Proposition \ref{chato}, is of
class $C^r,\, r\geq 2$.
We split the proof into two cases, according to the index of $p$.

\subsection{$\mbox{index}(p)=d-1$}
Let us assume first that $p$ is of index $d-1$, i.e.:
$\dim(W^u(p,f))=1$. This will simplify the techniques involved. We
may assume, taking a large positive iterate by $g$ and possibly
reducing $\delta$, that $\beta$, the segment of tangency, is
contained in the local stable manifold of $p$ in a local chart which
is a linearizing neighborhood $U(p)$ of $p$.

 Let $\psi:[0,\delta]\to \R$ be a $C^\infty$ bump function
satisfying:
\begin{enumerate}
\item $\psi(s)=1/5$, for $s\in [0,\delta/16]$. This implies that
$\psi^{(k)}(0)=\psi^{(k)}(\delta/16)=0$ for all $k\geq 1$.
\item $\psi'(s)<0 $ for $s\in(\delta/16,\delta/8)$.
\item $\psi(s)=0$ for all $s\in [\delta/8,\delta/4]$, this implies
that  $\psi^{(k)}(\delta/8)=\psi^{(k)}(\delta/4)=0$ for all $k\geq 1$.
\item $\psi'(s)>0$ for $s\in (\delta/4,3\delta/8)$.
\item $\psi(s)=1$ for all $s\in [3\delta/8,\delta]$, this implies
that $\psi^{(k)}(3\delta/8)=\psi^{(k)}(\delta)=0$ for all $k\geq 1$.
\end{enumerate}


Next, consider $b:(-\delta,5\delta/4]\to\R$
such that
$$b(s)=\psi(s)\mbox{ for all } s\in [0,\delta]\, , $$
$$b(s)=\frac{1}{5}\psi(2(s+\delta/2))\mbox{ for all } s\in[-\delta/2,0]\, , $$
$$b(s)=\frac{1}{5^2}\psi(2^2(s+3\delta/4))\mbox{ for all }
s\in[-3\delta/4,-\delta/2]\, , $$
and in general
$$b(s)=\frac{1}{5^n}\psi(2^n(s+\delta(1-1/2^n))\mbox{ for all }
s\in[-\delta(1-1/2^n),-\delta(1-1/2^{n-1})]\, . $$
Put also
$$b(s)=5\psi(\frac{s-\delta}{2}) \mbox{ for } s\in[\delta,5\delta/4]\, .$$
It is easy to see that $b(s)$ is $C^\infty$ at $(-\delta,5\delta/4]$.
We may assume that for $s\in[0,\delta]$, $|b'(s)|\leq 24/\delta$ and
$|b''(s)|\leq K/\delta^2$, for some $K>0$. \\ Hence for $s\in [-\delta(1-1/2^n), -\delta(1-1/2^{n-1}]$
we have
$$|b'(s)|=\frac{1}{5^n}2^n\left|\psi'(2^n(s+\frac{2^{n}-1}{2^n}\delta))\right|\leq
\frac{24\cdot 2^n}{5^n\delta}$$
$\qquad $ and
$$ |b''(s)|=\frac{4^n}{5^n}\left|\psi''(2^n(s+\frac{2^{n}-1}{2^n}\delta))\right|\leq
\frac{4^nK}{5^n\delta^2}\, .$$
Therefore $|b'(s)|\to 0$ and $|b''(s)|\to 0$ when $s\to -\delta$.
Setting $b(-\delta)=0$ we have that $b'(-\delta)=b''(-\delta)=0$ and $b$ is of
class $C^2$ on $[-\delta,5\delta/4]$.

 Let $w$ be the unit vector in $T_xM$ tangent to the expanding eigenvector
of $Dg_p$.
Recall we are assuming that $\dim(W^u(p,G))=1$.
 Then $w$ is not contained in $T_xW^s(x,g)+T_xW^u(x,g)$ since
 $T_xW^u(x,g)$ is tangent to $T_xW^s(x,g)$.
Recall that $(0,s,0)$
are the coordinates of $\beta$ in a local chart
and that the interval
$(0,[-\delta, 5\delta/4],0)$ is totally contained in $\beta$.
In the plane given by the origin $0$
(identified with $x$) and the vectors $u$ and $w$ we consider the
graph of the function $\hat l:[\delta/4,5\delta/4]\to\R$ given by
$$\hat l(s)=\epsilon_1\cdot(s-\delta/2)(\delta-s)\, ,\quad
s\in [\delta/4,5\delta/4]\, .$$ Observe that for
$s\in[\delta/4,5\delta/4]$, $\hat l(s)$ vanishes at $s=\delta/2$
and $s=\delta$ and it has a maximum value equals to $\delta^2\epsilon_1/16$ at
$s=3\delta/4$.
 Now we extend $\hat l$ to $[-\delta,5\delta/4]$
in the following way:
$$\hat l(s)=\epsilon_2\cdot(s+\delta/4)(-s)\, ,\quad
s\in [-3\delta/8,\delta/8]\, ,$$
$$\hat l(s)=\epsilon_3\cdot(s+5\delta/8)(-\delta/2-s)\, ,\quad
s\in [-11\delta/16,-7\delta/16]\, ,$$
and in general for $n\geq 1$:
$$\hat l(s) = \epsilon_{n+1}\cdot (s+\delta(1-3/2^{n+1}))(-\delta(1-1/2^{n-1})-s)\, , \quad
s\in[-\delta(1-5/2^{n+2}),-\delta(1-9/2^{n+2})]\, .$$
For $s\in [-\delta(1-5/2^{n+2}),-\delta(1-9/2^{n+2})]$, $\hat l$
vanishes only at $s_{n_1}=-\delta(1-3/2^{n+1})$ and
$s_{n_2}=-\delta(1-1/2^{n-1})$ and it has a maximum value $\delta^2\epsilon_{n+1}/(5^n\cdot 2^{2n+4})$
at $(s_{n_1}+s_{n_2})/2$.
We complete the definition of $\hat l$ in $[-\delta,5\delta/4]$ setting $\hat l(s)=0$ elsewhere.

Finally, let $l(s)=\hat l(s)b(s)$ for all $s\in [-\delta,5\delta/4]$.
Then $l(s)$ is $C^\infty$ in $(-\delta,5\delta/4]$ and $C^2$ in $[-\delta,5\delta/4]$.

Put coordinates in the local chart $Y=(S,s,t)$ and denote by
$B_s$ a small $(d-1)$-dimensional disk around $x$ contained in a
fundamental domain of $W^s_{loc}(p,g)$ whose coordinates in the
local chart are $(S,s,0)$. Analogously
 denote by $B_u$  a small $1$-dimensional disk contained in $W^u(p,g)$ around $x$
 whose coordinates in the local chart are $(0,s,0)$.
Note that $B_s$ is characterized by
 $t=0$; and $B_u$ is the arc $\beta$ contained in $B_s$, parameterized
 by $s\in[-\delta,5\delta/4]$. The point $x$ is identified with $(0,0,0)$.

Now, pick another $C^\infty$ bump function $\varphi$ such
 that $\varphi$ vanishes outside a $\epsilon$ neighborhood of
 $\beta$, $\epsilon\geq 2\epsilon_1$, and is equal to 1 in the
 $\epsilon/2$ neighborhood of $\beta$.

Let $h:M\to M$ be given by
$$ \big(S,s,t\big)\mapsto \big(S,s,t+l(s)\varphi(\|Y\|)\big)$$
and $h=id$ outside $B(\beta,\epsilon)$ where $\epsilon$ is such that
the $\epsilon$-neighborhood of $\beta$ does not intersect $U\cap
g(U)\cap g^{-1}(U)$.

 Now, letting $G=h\circ g,$ we get,
by construction,  that $G$ is a small perturbation of $g$, and,
as in Proposition \ref{chato}, it is not difficult to see that
$B_s\subset W^s_{loc}(x, G)\subset W^s(p,G)$ and
$(0,s,l(s))\subset W^u_{loc}(x, G)\subset W^u(p,G).$
Furthermore, it is straightforward to show that $W^s(p,G)$ and
$W^u(p,G)$ intersect transversely at the points
$$(0,\delta/2,0),\, (0,\delta,0),\,(0,-\delta/4,0),\, (0,0,0),\ldots ,
(0,-\delta(1-3/2^{n+1}),0),\, (0,-\delta(1-1/2^{n-1}),0),\ldots   \,
$$ and the absolute value of the tangent of the angles at the points
$$(0,-\delta(1-3/2^{n+1}),0),\, (0,-\delta(1-1/2^{n-1}),0)\quad \mbox{ is
}\quad  \frac{\epsilon_{n+1}\delta}{5^n2^{n+1}},\; n\in\N \, . $$

We denote by $\beta'$ the graph of $l(s)$ in the plane $0uw$. If we choose
$\epsilon$,
$\epsilon_1\geq\epsilon_2\geq\cdots\geq\epsilon_n\geq\cdots$ with $\epsilon_n\searrow 0$ and
$\delta$ small, we may obtain the perturbation
$G=h\circ g$ to be $C^1$ small
(see \cite{Nh1}). Moreover, we can also assume that :
\begin{enumerate}
\item $G= g$ on $U\cap  g(U)\cap  g^{-1}(U)$, where we recall that $U=U(p)$ is a linearizing neighborhood of $p$.
\item $W^s_{loc}(p, g)=W^s_{loc}(p,G)$ and
$W^u_{loc}(p, g)=W^u_{loc}(p,G)$. Here $loc>0$ states for a suitable
small positive number,
\item $W^s_{loc}(x,G)\cup W^u_{loc}(x,G)\subset U\backslash\,
G(U)$. In particular $\beta\cup \beta'\subset U\backslash\, G(U)$.
\item $G^k(W^s_{loc}(x,G))\subset U$ for all $k\geq 0$ and there is $T>0$ such that
$G^{-k}(W^u_{loc}(x,G)) \subset U$ for all
$k\geq T$,
\item $ G^{-T}(\beta\cup\beta')\subset
U \backslash\, G^{-1}(U)$.
\end{enumerate}
We point out that item (5) above follows from
the fact that we may reduce the value of $\delta$, if it were
necessary, in order to ensure it.

\begin{Lem}
\label{inesperado}
There exists a sequence $\epsilon_n\searrow 0$ such that
$G$ is not $h$-expansive.
\end{Lem}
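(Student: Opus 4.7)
The plan is to exhibit a sequence $\epsilon_n\searrow 0$ with $h^*_G(\epsilon_n)>0$; by monotonicity of $h^*_G(\cdot)$ in $\epsilon$, this already prevents $G$ from being $h$-expansive. For each $n$, such an $\epsilon_n$ will arise from a small Smale horseshoe $H_n\subset H(p,G)$ of positive $G$-entropy that fits inside a set $\Gamma_{\epsilon_n}(z_n)$ for an appropriate point $z_n$.

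The points $y_n=(0,-\delta(1-1/2^{n-1}),0)$ enumerated just above the lemma are transverse homoclinic intersections of $W^s(p,G)$ and $W^u(p,G)$ accumulating, as $n\to\infty$, on the endpoint $(0,-\delta,0)$ of $\beta$. Applying the Birkhoff--Smale construction at $y_n$, one obtains a small box $B_n$ around $y_n$ of diameter $\eta_n$, a return time $k_n\ge 1$ such that $G^{k_n}(B_n)$ crosses $B_n$ in two strips, and the compact maximal invariant set $H_n=\bigcap_{j\in\Z}G^{-jk_n}(B_n)\subset H(p,G)$, on which $G^{k_n}$ is topologically conjugate to the full $2$-shift. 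Thus $h(G^{k_n}|H_n)=\log 2$ and $h(G|H_n)=\log(2)/k_n>0$.

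Let $z_n\in H_n$ be a fixed point of $G^{k_n}$ (which exists because $H_n$ carries the full two-shift). For any $x\in H_n$ and $j\in\Z$, write $j=mk_n+i$ with $0\le i<k_n$. Since $H_n$ is $G^{k_n}$-invariant and contained in $B_n$, we have $G^{mk_n}(x)\in B_n$ and $G^{mk_n}(z_n)=z_n\in B_n$, so both $G^j(x)=G^i(G^{mk_n}(x))$ and $G^j(z_n)=G^i(z_n)$ lie in the transition box $G^i(B_n)$. Therefore $d(G^j(x),G^j(z_n))\le \operatorname{diam}(G^i(B_n))\le\epsilon_n$, where $\epsilon_n:=\max_{0\le i<k_n}\operatorname{diam}(G^i(B_n))$. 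Consequently $H_n\subset\Gamma_{\epsilon_n}(z_n)$ and $h^*_G(\epsilon_n)\ge h(G,\Gamma_{\epsilon_n}(z_n))\ge h(G|H_n)=\log(2)/k_n>0$, so $\epsilon_n$ is not a constant of $h$-expansiveness.

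The main obstacle is to ensure that $\epsilon_n\to 0$. Since the angle of intersection at $y_n$ decays rapidly with $n$ (it can be made as small as the $n$-th coefficient chosen in the construction of $\hat l$), the return time $k_n$ necessarily tends to infinity, and the naive bound $\operatorname{diam}(G^i(B_n))\le\|DG\|^i\eta_n$ blows up. The remedy is the classical Smale construction along a homoclinic orbit accumulating on a hyperbolic fixed point: using the linearizing chart around $p$ provided by Theorem \ref{BDP}, take $B_n$ long in the contracting direction of $p$ and thin in the expanding one, with the widths tuned by the hyperbolic rates $\lambda_1,\mu_1$ so that during the long portion of the orbit of $y_n$ inside $U(p)$, the unstable width of the transition box $G^i(B_n)$ stays below $\eta_n$ while the stable length is contracted, and only a uniformly bounded number of iterates visit $M\setminus U(p)$. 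This is the mechanism underlying \cite[Lemma 5.1, Lemma 6.3]{DN}; in our setting the explicit linear dynamics of $G$ near $p$ and the freely chosen, decreasing coefficients of $\hat l$ make it straightforward to force $\eta_n$, hence $\epsilon_n$, to tend to zero as $n\to\infty$.
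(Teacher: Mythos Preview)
Your overall strategy---produce, for a sequence of scales tending to zero, small horseshoes of positive $G$-entropy sitting inside sets $\Gamma_{\epsilon}(z)$---is the same as the paper's, and your reduction ``$h^*_G(\epsilon_n)>0$ for $\epsilon_n\to 0$ forces non--$h$-expansiveness'' is correct by monotonicity of $h^*_G$. The implementation, however, differs from the paper's in a way that matters.

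In the paper the sequence $\epsilon_n$ in the lemma is \emph{the very sequence of bump heights used to define $\hat l$ and hence $G$}; the proof is inductive in these parameters. Having built the first horseshoe $H_1$ from the \emph{pair} of transverse zeros $s=\delta/2$ and $s=\delta$ of the first bump (so that a single return $G^{k_1+T}$ of a thin rectangle $R_1$ crosses $R_1$ in two strips), one then chooses $\epsilon_2$ small enough that the second bump lies strictly below $R_1$, and repeats with the pair of zeros of the second bump, and so on. The nested multi-bump structure of $l(s)$ is precisely what manufactures, for one fixed $G$, a sequence of disjoint two-legged horseshoes whose rectangles live at scale $\delta/2^n$; the smallness of the $\Gamma$-tubes then comes for free from the shrinking width and height of the bumps, with no appeal to dissipativity conditions on $\lambda_1,\mu_1$.

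Your route instead fixes $G$, picks a \emph{single} transverse homoclinic point $y_n$ from each bump, and invokes Birkhoff--Smale to get a full $2$-shift for one iterate $G^{k_n}$ on a box $B_n$ around $y_n$. Two points deserve care. First, a box centred at a single transverse homoclinic point does not, by itself, yield two Markov crossings for one fixed return time; the classical construction uses either two nearby homoclinic points or two different return times (equivalently, a box near $p$ rather than near $y_n$). You have both zeros of each bump available, so this is easy to fix---but it is exactly the mechanism the paper exploits and you should say so. Second, your control of $\epsilon_n=\max_i\mbox{diam}(G^i(B_n))$ is deferred to \cite{DN} and to a box-shaping heuristic; this can be made to work, but it is the substantive part of the argument and the paper's multi-bump construction is designed precisely to make it explicit. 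In short: correct idea, but you are bypassing the feature of $l(s)$ that the paper built to carry the proof.
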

\begin{proof}
Recall that we are
working in a linearizing neighborhood $U$ of $p$ with respect to $g$.
Set
$$U^u_k=U\cap g(U)\cap\cdots\cap g^k(U)\quad \mbox{and}\quad U^s_k=U\cap g^{-1}(U)\cap
\cdots \cap g^{-k}(U)\, .$$

Let $\gamma'=G^{-T}(\beta')\subset U \backslash\,
G^{-1}(U)$ and denote by $(0,0,d_0),\, (0,0,d_\infty)$
the coordinates of the end points of $\gamma'$ corresponding
respectively to $s=5\delta/4$ and $s=-\delta$. In the same way we
label all points in $\gamma'$ corresponding to the \emph{transverse}
intersections of $\beta$ with $\beta'$: $(0,0,d_1)$ corresponds to
$(0,\delta/2,0)$ and $(0,0,d'_1)$ corresponds to $(0,\delta,0)$,
$(0,0,d_2)$ corresponds to $(0,-\delta/4,0)$ and $(0,0,d'_2)$
corresponds to $(0,0,0)$, $(0,0,d_3)$ corresponds to
$(0,-5\delta/8,0)$ and $(0,0,d'_3)$ corresponds to $(0,-\delta/2,0)$,
and so on, labeling
the image by $G^{-T}$ of all the
points of transverse intersection between $\beta$ and $\beta'$.

Take small arcs $a_1^s$ and $a_1'^s$ contained in $U\backslash
G^{-1}(U)$ tangent to the the direction of the
eigenvector corresponding to the weakest contracting eigenvalue of
$(DG)_p$ at the points $(0,0,d_1)$ and $(0,0,d'_1)$.
Multiply them by a $(d-2)$-dimensional disk $C$ of diameter $c$.
Analogously take small arcs $a_1^u$ and $a_1'^u$ tangent to the
direction corresponding to the eigenvector of the expanding
eigenvalue of $(DG)_p$
at the points $(0,\delta/2,0)$ and $(0,0,d'_1)$ and contained in
$U\backslash G(U)$.
By the $\lambda$-lemma,
 \cite{PdeM}[Lemma 7.1], the forward
orbits of $a_1^u$ and $a_1'^u$ contain arcs arbitrarily $C^1$ near
$W^u(p,G)$ and the backward orbits of $a_1^s\times C$ and
$a_1'^s\times C$ contain $(d-1)$-dimensional disks arbitrarily $C^1$
near $W^s(p,G)$. By the way we have chosen $a_1^s$ and
$a_1'^s$ and the assumption about the eigenvalues of
$D(G)_p$ (all positive real), we have that there is
 $k_1=k_1(\epsilon_1,\delta)$ such that for $k\geq k_1$ in $U$ we have
$\dist(G^{-k}(a_1^s),\beta)<\epsilon_1\delta^2/32$ and
$\dist(G^{-k}(a_1'^s),\beta)<\epsilon_1\delta^2/32$.
Moreover, we may choose $c>0$ small such that
$G^{-k}(a_1^s\times C)$ and
$G^{-k}(a_1'^s\times C)$ cut $\beta'$ but is contained in
the $\epsilon/4$ neighborhood of $\beta$ and therefore
$\varphi=1$ there.

 In the local coordinates we have chosen, we pick a thin rectangle $R_1$ with
 top and bottom given by $G^{-k_1}(a_1^s\times C)$ and
 $G^{-k_1}(a_1'^s\times C)$ and bounded in its sides by  segments parallel
 to the $w$-axis which is transverse to $D_S$.
Increasing $k_1$  and reducing $c$, $a_1^s$ and $a_1'^s$, if it were
necessary, we may assume that $G^{k_1}(R)$ is contained
in the $c$-neighborhood of the graph of $\beta'$ restricted to
$[3\delta/8,9\delta/8]$.

Set $g_1= G^{k_1}$ and let $g_2=G^T\big |
(U\backslash\, G^{-1}(U)): (U\backslash\,
G^{-1}(U))\to (U\backslash\, G(U))$ and consider
$$\Lambda_1=\bigcap_{n\in\Z} (g_2\circ g_1)^n(R_1)\, .$$
Then $\Lambda_1$ contains a horseshoe $H_1$ (see \cite{Nh1,DN}) and therefore
$H_{\epsilon_1}=\cup_{j=0}^{k_1+T-1} G(H_1)$ has positive
topological entropy. Since this horseshoe is arbitrarily small
we may assume that there is  a periodic point $p_1\in H_1$ such that
$H_1\subset \Gamma_\epsilon(p_1)$
see Definition \ref{gammadeepsilon}, where $0<2\epsilon_1\leq \epsilon$.
Moreover, the periodic point $p_1$ is homoclinically related to $p$
since by the $\lambda$-lemma we have that positive iterates by
$(g_2\circ g_1)^{-1}$ give thin subrectangles crossing all of $R_1$ and
hence the stable manifold of $p_1$ cuts $W^u_{loc}(x)\subset
W^u(p,G)$ and analogously positive iterates by $g_2\circ g_1$
gives subrectangles close to $\beta'$ in the Hausdorff metric and
therefore the unstable manifold of $p_1$ cuts $W^s_{loc}(x)\subset
W^s(p,G)$.

\begin{claim}
\label{paraentender}
There is  $\{\epsilon_n\}_{n=1}^\infty$ such
that for every $\epsilon_n$ it is associated a horseshoe $H_{\epsilon_n}$ with
$H_{\epsilon_n}\subset H(p,G)$ and $\lim_{n\to\infty}\diam(H_{\epsilon_n})= 0$.
\end{claim}
\begin{proof}
Let us choose $\epsilon_2>0$ and construct $H_{\epsilon_2}$. For this,  pick
$\epsilon_2\leq \epsilon_1$ such that
$G^{-k_1}(a_1^s\times C)$ and
$G^{-k_1}(a_1'^s\times C)$ are at a distance greater than
$\epsilon_2$ from $(S,s,0)$. Since $\epsilon_n\leq\epsilon_2$ for
all $n\geq 2$ we have that no part of the graph of $l(s)$ for $s\in
[-\delta,\delta/4]$ cuts $R_1$.

We found a new rectangle $R_2$ disjoint from $R_1$ contained in
$U^s_{k_2}\backslash\, U^s_{k_2+1}$ with $k_2> k_1$ applying again the $\lambda$-Lemma.
Increasing $k_2$  and reducing the corresponding values of $c_2$, $a_2^s$ and $a_2'^s$,
if it were necessary, we may assume that $G^{k_2}(R_2)$ is contained in the
$c_2$-neighborhood of the graph of $\beta'$ restricted to $[-5\delta/16,\delta/16]$.
By construction when we iterate by $G$ the images of $R_1$ and $R_2$ cannot intersect
since in $U\backslash\,G(U)$ there are only one iterate of $R_1$ and one iterate
of $R_2$ (namely $R_1$ and $R_2$).
We then have for $G$ two disjoint small horseshoes,
$H_1,\, H_2$  both with periodic points $p_1,p_2$ homoclinically related to $p$
(the proof that $p_2$ is homoclinically related to $p$ is the same than that to $p_1$).
Hence both $H_1$ and $ H_2$ are included in $H(p,G)$.

Next we choose $\epsilon_3\leq \epsilon_2\leq
\epsilon_1$ so that $G^{-k_2}(a_2^s\times C_2)$ and
$G^{-k_2}(a_2'^s\times C_2)$ are at a distance greater
than $\epsilon_3$ from $(S,s,0)$.
For such $\epsilon_3$, there
is a horseshoe $H_{\epsilon_3}$ disjoint from $H_{\epsilon_1}$ and
$H_{\epsilon_2}$ but still contained in $H(p,G)$. This construction
follows the same steps as before:
first find a thin rectangle $R_3$ cutting the
graph of $l(s)$ only for $s\in [-21\delta/32,-15\delta/32]$,
$R_3\cap R_1=\emptyset$, $R_3\cap R_2=\emptyset$. Then find an appropriate
positive real number
$k_3>k_2$ such that
 $G^{k_3}(R_3)$ is contained in the $c_3$-neighborhood of
the graph of $\beta'$ restricted to $[-21\delta/32,-15\delta/32]$.

In this way we may pick the sequence $\epsilon_n$ such that for
every $n$ it is associated
a horseshoe $H_{\epsilon_n}$ satisfying (1) $\lim_{n\to \infty}\diam(H_{\epsilon_n})\to 0$,
(2) $H_{\epsilon_j}\cap
H_{\epsilon_i}=\emptyset$ and (3) $H_{\epsilon_n}\subset
H(p,G)$ for all $n\in \Z^+$.  This proves Claim \ref{paraentender}.
\end{proof}

Since the topological
entropy of $H_{\epsilon_n}$ is positive for all $n$, and
$H_{\epsilon_n}\subset H(p,G)$, we conclude that
$G/H(p,G)$ is not $h$-expansive,
violating robustness of $h$-expansiveness. The proof of Lemma \ref{inesperado} is complete.
\end{proof}

%

Then, the final conclusion is that hypothesis (AD) described in the
begining of this section can not hold. In another words, we
 conclude that there
 exists $m>0$ such that for all homoclinic point $x\in H(p)$  there is
$1\leq k\leq m$ such that
$$\|Df^k/E(x)\|\, \|Df^{-k}/F(f^k(x))\| \leq \frac{1}{2}\, .$$

Following \cite[Theorem A]{SV},
it can be built  a dominated splitting for
 the homoclinic points of $H(p,f)$ as required, and then extend it by continuity to the whole $H(p,f)$ using that the closure of the homoclinic points
coincide with $H(p,f)$.

Thus in the case of $p$ a periodic point of index $d-1$ the proof of Theorem \ref{prin1} follows.

\begin{Obs}
Let us point out that even though we can assume that $g$, the
diffeomorphism with a segment of homoclinic tangencies, is
$C^\infty$, the bump function $l(s)$, used to perturb  it, is just
$C^2$. Hence it seems that a similar construction can be used to
prove the stronger result that $G/H(p)$ is not
asymptotically $h$-expansive. Recall, \cite{Bu, BFF}, that $C^\infty$- diffeomorphisms are asymptotically $h$-expansive so that a $C^\infty$ perturbation
 of a $C^\infty$ diffeomorphism does not disprove asymptotically $h$-expansiveness.
\end{Obs}
\subsubsection{$\mbox{index}(p)=k<d=\dim(M)$ }
For the general case of $\mbox{index}(p)=k<d=\dim(M)$ the proof is similar, the perturbation $h$ of $g$ given $G=h\circ g$ has to be adapted as we sketch below.

Let $w$ be the unit vector in $T_xM$ tangent to the less expanding eigenvector
of $Dg_p$.
 Then $w$ is not contained in $T_xW^s(x,g)+T_xW^u(x,g)$ see Propositions \ref{buenangulo} and \ref{chato}.
In a local chart around $x$, $(0,s,0)$ represent the coordinates of the arc $\beta$ but the coordinates $(S,s,T)$ are such that $S$ is a $(k-1)$-dimensional vector, and $T$ a $(d-k)$ dimensional vector that we split as $(t,T')=T$ with $t$ one-dimensional.    
As in the codimension one case we have that
$(0,[-\delta, 5\delta/4],0,0)$ is totally contained in $\beta$.
In the plane given by the origin $0$
(identified with $x$) and the vectors $u$ corresponding to $(0,1,0,0)$ and $w$ corresponding to $(0,0,1,0)$  we, as above, consider the
graph of the function $\hat l:[\delta/4,5\delta/4]\to\R$ given by
$$\hat l(s)=\epsilon_1\cdot(s-\delta/2)(\delta-s)\, ,\quad
s\in [\delta/4,5\delta/4]\, .$$ 
 Now we extend $\hat l$ to $[-\delta,5\delta/4]$ and define the $C^2$-function $l(s)$ 
as in the codimension one case.

Put coordinates in the local chart $Y=(S,s,t,T)$ and denote by
$B_s$ a small $k$-dimensional disk around $x$ contained in a
fundamental domain of $W^s_{loc}(p,g)$ whose coordinates in the
local chart are $(S,s,0,0)$. Analogously
 denote by $B_u$  a small $d-k$-dimensional disk contained in $W^u(p,g)$ around $x$
 whose coordinates in the local chart are $(0,s,0,T)$.
Note that $B_s$ is characterized by
 $t=0,\,T=0$; and $B_u$ contains the arc $\beta$ contained in $B_s$, parameterized
 by $s\in[-\delta,5\delta/4]$. The point $x$ is identified with $(0,0,0,0)$.

Now, pick a $C^\infty$ bump function $\varphi$ such
 that $\varphi$ vanishes outside a $\epsilon$ neighborhood of
 $\beta$, $\epsilon\geq 2\epsilon_1$, and is equal to 1 in the
 $\epsilon/2$ neighborhood of $\beta$.

Let $h:M\to M$ be given by
$$ \big(S,s,t,T\big)\mapsto \big(S,s,t+l(s)\varphi(\|Y\|),T\big)$$
and $h=id$ outside $B(\beta,\epsilon)$ where $\epsilon$ is such that
the $\epsilon$-neighborhood of $\beta$ does not intersect $U\cap
g(U)\cap g^{-1}(U)$.

 Now, letting $G=h\circ g,$ we get,
by construction,  that $G$ is a small perturbation of $g$, and,
as in Proposition \ref{chato}, it is not difficult to see that
$B_s\subset W^s_{loc}(x, G)\subset W^s(p,G)$ and
$(0,s,l(s),T)\subset W^u_{loc}(x, G)\subset W^u(p,G).$ 

The remaining of the proof of Theorem \ref{prin1} follows in a similar way 
to that of the codimension one case.

\section{Proof of Theorems \ref{prin2} and \ref{prin3}}
\label{genericos}

In this section we prove both Theorems \ref{prin2} and \ref{prin3}.
For this, let us first remark that
after \cite[\S 2.1]{ABCDW}, $C^1$-generically the finest dominated splitting
has a very special form. Thus, before we continue, let us first put
$f$ in that context.

\noindent {\em Generic assumptions.} There exists a residual subset
$\mathcal{G}$ of $\dif^1(M)$ such that if $f:M\to M$ is a
diffeomorphisms belonging to $\mathcal{G}$ then
\begin{enumerate}
\item
$f$ is Kupka-Smale, (i.e.: all periodic points are hyperbolic and
their stable and unstable manifolds intersect transversally)
\item for any pair of saddles $p,\, q$, either $H(p,f)=H(q,f)$ or $H(p,f)\cap H(q,f)=\emptyset$.
\item for any saddle $p$ of $f$, $H(p,f)$ depends continuously on $g\in\mathcal{G}$.
\item The periodic points of $f$ are dense in $\Omega(f)$.
\item The chain recurrent classes of $f$ form a partition of the chain recurrent set of $f$.
\item every chain recurrent class containing a periodic point $p$ is the homoclinic class
associated to that point.
\end{enumerate}

Taking into account \cite[Corollary, 6.6.2, Theorem 6.6.8]{Go},
that guarantees that the homoclinic tangency can be
associated to a saddle inside the homoclinic class,
the next result is proved in \cite[Corollary 3]{ABCDW}:

\begin{Teo}
\label{abcdw} (\cite[Corollary 3]{ABCDW}) There is a residual subset $\mathcal{I}\subset
\mathcal{G}$ of $\dif^1(M)$ such that if $f\in\mathcal{I}$ has a
homoclinic class $H(p,f)$ which contains hyperbolic saddles of
indices $i<j$ then either
\begin{enumerate}
\item For any neighborhood $U$ of $H(p,f)$ and any $C^1$-neighborhood
$\mathcal{U}$ of $f$ there is a diffeomorphism $g\in\mathcal{U}$
with a homoclinic tangency associated to a saddle of the homoclinic
class $H(p_g,g)$, where $p_g$ is the continuation of
$p$.\vspace*{3mm}
or
\item
There is a dominated splitting
$$T_{H(p,f)}M=
E \oplus F_1\oplus\cdots\oplus F_{j-i}\oplus G$$ with $\dim(E)=i$
and
 $\dim(F_h)=1$ for all $h$ and $\dim(G)=\dim(M)-j$. Moreover,
the sub-bundles $F_h$ are not hyperbolic.
\end{enumerate}
\end{Teo}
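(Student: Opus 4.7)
The plan is to prove the dichotomy by contradiction: assume alternative (1) fails, i.e., there is a $C^1$-neighborhood $\mathcal{U}$ of $f$ and a neighborhood $U$ of $H(p,f)$ such that no $g\in\mathcal{U}$ has a homoclinic tangency associated to a saddle of $H(p_g,g)$ (``$H(p,f)$ is $C^1$-robustly far from tangencies''), and derive the dominated splitting of alternative (2). First I would exploit the generic assumptions (1)--(6): these give Kupka--Smale, continuous variation of $H(p_g,g)$, density of periodic orbits in the class, and identification of $H(p,f)$ with the chain recurrence class of $p$. Combined with the hypothesis that $H(p,f)$ contains saddles of indices $i<j$, the heterodimensional cycle machinery of \cite{BD1} yields, via arbitrarily small $C^1$-perturbations staying inside $H(p_g,g)$, saddles of \emph{every} intermediate index $k$ with $i\leq k\leq j$.

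Next, the central step is to upgrade ``no tangency'' to ``index-$k$ domination'' for each such $k$. For this I invoke Gourmelon's refinement of Franks' lemma (Corollary 6.6.2 and Theorem 6.6.8 in \cite{Go}, exactly the result the excerpt flags before the statement): if $H(p,f)$ does \emph{not} admit a dominated splitting of index $k$, then by concatenating along periodic orbits of index $k$ a sequence of tangent vectors witnessing the absence of domination, one produces a small $C^1$-perturbation along such an orbit whose stable and unstable subspaces collapse, and Gourmelon's theorem guarantees that the resulting homoclinic tangency is realized inside the continuation of $H(p_g,g)$. This contradicts robust far-from-tangency. Hence $H(p,f)$ carries a dominated splitting of every index $k$ with $i\leq k\leq j$.

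Taking the finest common refinement of all these dominated splittings, I obtain
\[
T_{H(p,f)}M = E\oplus F_1\oplus\cdots\oplus F_s\oplus G
\]
with $\dim E=i$, $\dim G=\dim M-j$ and $s=j-i$, so each $\dim F_h=1$. The non-hyperbolicity of each $F_h$ follows by contradiction: if some $F_h$ were uniformly contracted, then $E\oplus F_1\oplus\cdots\oplus F_h$ would be a contracting sub-bundle of dimension $>i$, incompatible with the existence of a saddle of index exactly $i$ inside $H(p,f)$ (its eigenspaces must respect the splitting and its stable dimension would be forced to exceed $i$); symmetrically, a uniformly expanded $F_h$ would contradict the existence of a saddle of index exactly $j$.

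The hard step I expect is the second one: the implication ``no index-$k$ domination $\Rightarrow$ one can create a tangency inside the continuation of the homoclinic class''. The subtle point is not merely producing a tangency somewhere in $M$ (which is already a classical Wen--Mañé application of Franks' lemma), but controlling that the saddle carrying the tangency remains homoclinically related to $p_g$ and that the tangent/transverse perturbation is performed inside $U$. This localization inside $H(p_g,g)$ is precisely what \cite[Theorem 6.6.8]{Go} provides, and it is the technical ingredient without which Theorem \ref{abcdw} would not hold in the form stated.
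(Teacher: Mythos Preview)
The paper does not give its own proof of this statement: Theorem~\ref{abcdw} is quoted from \cite[Corollary~3]{ABCDW}, and the only indication the authors give of the argument is the sentence immediately preceding it, where they point to \cite[Corollary~6.6.2, Theorem~6.6.8]{Go} as the ingredient guaranteeing that the homoclinic tangency can be created \emph{inside} the continuation of the class. So there is no in-paper proof to compare your proposal against.

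That said, your outline is a faithful reconstruction of how the result is actually proved in \cite{ABCDW}, and it matches the hint the present paper gives. The three pieces are exactly the right ones: (i) the generic ``index interval'' property (saddles of every index between $i$ and $j$), which is one of the main results of \cite{ABCDW} itself rather than of \cite{BD1} as you wrote; (ii) the implication ``no index-$k$ domination over the class $\Rightarrow$ a tangency can be created at a saddle that stays homoclinically related to $p_g$'', which is precisely Gourmelon's localized version of the Wen/Bonatti--D\'{\i}az--Pujals mechanism and is, as you correctly identify, the delicate step; and (iii) the non-hyperbolicity of the one-dimensional central bundles, forced by the coexistence of saddles of the two extreme indices. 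Your reasoning on (iii) is sound once one is careful with the index convention used in this paper (index $=$ dimension of the unstable manifold): a saddle of index $j$ forces $F_h$ to be contracting along its orbit, while a saddle of index $i$ forces $F_h$ to be expanding along its orbit, so no $F_h$ can be uniformly hyperbolic.
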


\noindent {\bf Proof of Theorem \ref{prin2}}.\/

Let $H(p)\subset M$ be a homoclinic class robustly entropy
expansive, i.e., there is a neighbourhood $\mathcal{U}\subset
\dif^1(M)$ such that $f\in\mathcal{U}$, there is a continuation
$H(p_g)$ of $H(p)$ for all $g\in\mathcal{U}$ and $H(p_g)$ is
$h$-expansive. By Theorem \ref{prin1} we have a dominated
splitting defined on $T_{H(p)}M$.  Moreover, by \cite[Theorem
6.6.8]{Go}, we have that in $H(p_g)$ there is a finest dominated
splitting which
 has the form
\begin{equation} \label{pregenerico}
T_{H(p_g,g)}M= E \oplus F_1\oplus\cdots\oplus F_{j-i}\oplus G
\end{equation}
 with
$E$, $G$ and $F_h$ $Df$-invariant sub-bundles, $h=1,\ldots , j-i$,
and all $F_h$ one-dimensional, and $$E\prec F_1\prec F_2\cdots \prec
F_{j-i}\prec G\, .$$ Otherwise, by the theorem of \cite{Go} cited
above, we may create with an arbitrarily small $C^1$-perturbation a
tangency {\em inside} the perturbed homoclinic class. After that we repeat the
arguments of \ref{herradurita} contradicting $h$-expansiveness.
 Theorem \ref{prin2} is proved.
 \medbreak

\noindent {\bf Proof of Theorem \ref{prin3}}.\/
 By \cite{CMP} there
is residual subset $\cR_0$ of $\mbox{Diff}^1(M)$ such that, for every
$f \in \cR_0$, any pair of homoclinic classes of f are either disjoint
or coincide. For $f \in \cR_0$, by \cite{Ab}, the number of different
homoclinic classes of $f$  is locally constant in $\cR_0$.
We split the proof into two cases: (1) this number is finite (and
in this case $f$ is {\em tame}) or (2) there are infinitly many
distinct homoclinic classes (and in this case $f$ is {\em wild}.
\medskip

\noindent {\bf $f$ is tame }\/
In this case $H(p)$ is isolated. Before we continue, recall
that if $V\subset M$ and  $\Lambda_f(V)$ is
 the maximal invariant set of $f$ in $V$, i.e.: $\Lambda_f(V)+\cap_{n\in\Z}f^n(V)$,
then set $\Lambda_f(V)$ is
robustly transitive if there is a $C^1$-neighbourhood $\mathcal{U}$
of $f$ such that $\Lambda_g(\overline{V})=\Lambda_g(V)$ and
$\Lambda_g(V)$ is transitive for all $g\in\mathcal{U}$ (i.e.:
$\Lambda_g(V)$ has a dense orbit).

\begin{Lem} \label{extremos1}
Assume $f:M\to M$ is tame and that
$T_{H(p)}M$ has a dominated splitting of the form
(\ref{pregenerico}). Then $E$ is contracting and $G$ is expanding.
\end{Lem}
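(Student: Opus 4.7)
The plan is to argue by contradiction, reducing the statement to results of \cite{BDPR} on isolated homoclinic classes. I would begin by observing that, since $f$ is tame, $H(p)$ is isolated in the sense that there exist neighborhoods $\mathcal{U}\subset \dif^1(M)$ of $f$ and $U\subset M$ of $H(p)$ such that for every $g\in\mathcal{U}$ the continuation $H(p_g)$ coincides with $R(g)\cap U$. Shrinking $\mathcal{U}$ if necessary, I may further assume that on each $H(p_g)$ the finest dominated splitting still has the form (\ref{pregenerico}), with the dimensions of $E$, the $F_h$ and $G$ constant for $g\in\mathcal{U}$.

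Next I would identify $\dim(E)$ with the minimum index among periodic points of $H(p)$. Density of periodic points in $H(p)$ (generic assumption (4)), together with the fact that $E$ is dominated by $F_1\oplus\cdots\oplus F_{j-i}\oplus G$, forces the stable bundle at any hyperbolic periodic $q\in H(p)$ to be contained in $E$, so $\dim(E^s(q))\leq \dim(E)$. Conversely, since (\ref{pregenerico}) is the \emph{finest} dominated splitting, $E$ admits no further invariant subbundle decomposition, which (by standard arguments using Mañé's techniques) implies that some periodic orbit in $H(p)$ realizes the stable dimension $\dim(E)$. Thus $\dim(E)$ equals the minimum index among periodic points of $H(p_g)$ for every $g\in\mathcal{U}$.

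Suppose for contradiction that $E$ is not uniformly contracting. Combining the Mañé ergodic closing lemma with Pliss' Lemma (applied under the domination hypothesis), one produces diffeomorphisms $g_n\to f$ in $\dif^1(M)$ and periodic orbits $O_n\subset H(p_{g_n})$ of period $\pi_n$ along which the average of $\log\|Dg_n|E\|$ is arbitrarily close to nonnegative. Applying Franks' Lemma (Lemma \ref{Fr}) along one such orbit, I perturb the derivatives in the $E$-directions so that all eigenvalues of $(D\tilde g)^{\pi_n}$ restricted to $E(O_n)$ acquire modulus $\geq 1$, obtaining $\tilde g\in\mathcal{U}$ and a periodic point $q_{\tilde g}$ whose stable dimension is strictly less than $\dim(E)$. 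By isolation, $q_{\tilde g}\in H(p_{\tilde g})$, contradicting the identification $\dim(E)=\min\!-\!\mathrm{index}(H(p_{\tilde g}))$ established above. Hence $E$ is uniformly contracting; applying the same argument to $f^{-1}$ yields that $G$ is uniformly expanding.

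The main obstacle is precisely the first step of the contradiction: extracting a periodic orbit \emph{inside} $H(p)$ on which the behavior along $E$ is non-contracting in average. The combination of the ergodic closing lemma, a Pliss-type argument, and the domination of $E$ needed for this is exactly the technical core of \cite{BDPR} in the isolated setting. Accordingly, the cleanest write-up is to invoke that reference directly, once the identification $\dim(E)=\min\!-\!\mathrm{index}(H(p))$ and the robustness of the isolation property have been set up.
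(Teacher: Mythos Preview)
Your final recommendation---invoke \cite{BDPR} directly once isolation is established---is exactly what the paper does, and its entire proof is one sentence: since $H(p)$ is isolated it is a robustly transitive set, maximal invariant in a neighbourhood, and then \cite[Theorem D]{BDPR} gives that the extremal sub-bundles are contracting and expanding. Note that the key hypothesis you should check before citing \cite{BDPR} is \emph{robust transitivity} of $H(p)$, which you never state; isolation of a homoclinic class gives this, and the paper makes that step explicit.

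Your intermediate sketch is therefore unnecessary, and it contains a slip worth correcting: under the domination $E\prec F_1\prec\cdots\prec G$, the sub-bundle $E$ is the \emph{most} contracted, so for a hyperbolic periodic $q\in H(p)$ one has $E(q)\subset E^s(q)$, hence $\dim(E)\le\dim(E^s(q))$---the reverse of what you wrote. Your conclusion that $\dim(E)$ equals the minimal stable index is still correct, but the inclusion you used to justify it is backwards. Since the whole paragraph is subsumed by the reference anyway, the cleanest fix is simply to drop the sketch and record isolation $\Rightarrow$ robust transitivity $\Rightarrow$ \cite[Theorem~D]{BDPR}.
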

\begin{proof}
Since $H(p)$ is isolated it is a robustly transitive set maximal
invariant in a neighbourhood $U\subset M$ and hence, according to
\cite{BDPR}[Theorem D],
 the extremal sub-bundles $E$ and $G$ are contracting and expanding respectively.
\end{proof}

Under the same hypothesis of the previous lemma either we have that
in a $C^1$-robust way the index of periodic points in $H(p_g)$, $g$
near $f$, are the same and equal to $\mbox{index}(p)$ or there are
$g$ arbitrarily $C^1$-close to $f$ such that in $H(p_g)$ there are
periodic points of different index.
In the first case we have
\begin{Lem} \label{indiceconstante}
There is a dense open subset $\mathcal{U}_1$ of $\mathcal{U}(f)$
in the $C^1$ topology such that
for all $g\in \mathcal{U}_1$ we have that $H(p_g)$ is hyperbolic.
\end{Lem}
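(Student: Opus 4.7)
The plan is to show that $\mathcal{U}_1:=\{g\in\mathcal{U}(f):H(p_g)\text{ is hyperbolic}\}$ in fact coincides with $\mathcal{U}(f)$. Openness is immediate, since hyperbolic basic sets and their continuations persist under $C^1$-perturbations, so the content of the lemma lies in density. I would obtain it by contradiction: assume some $g_0\in\mathcal{U}(f)$ has $H(p_{g_0})$ non-hyperbolic, and exploit the $C^1$-robust index-constancy hypothesis to reach a contradiction.

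By Theorem \ref{prin2}, the finest dominated splitting on $H(p_{g_0})$ has the form $E\oplus F_1\oplus\cdots\oplus F_c\oplus G$ with each $F_h$ one-dimensional and not hyperbolic, while Lemma \ref{extremos1} gives that $E$ is contracting and $G$ is expanding; non-hyperbolicity of $H(p_{g_0})$ then forces $c\geq 1$, and I fix a non-hyperbolic central bundle $F_h$. Since $F_h$ is one-dimensional and is neither uniformly contracting nor uniformly expanding over the transitive set $H(p_{g_0})$, a Pliss/Ma\~n\'e ergodic-closing argument in the spirit of \cite{BDP,BDPR} produces, for every $\gamma>0$, a hyperbolic periodic point $q\in H(p_{g_0})$ homoclinically related to $p_{g_0}$ whose unique eigenvalue $\lambda$ of $Dg_0^{\pi(q)}|_{F_h(q)}$ has $|\lambda|$ within $\gamma$ of $1$.

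Now I apply Franks' Lemma \ref{Fr} along the orbit of $q$, with support in small pairwise disjoint neighbourhoods of $q,g_0(q),\ldots,g_0^{\pi(q)-1}(q)$ chosen to avoid the transverse heteroclinic intersections that witness the homoclinic relation between $q$ and $p_{g_0}$. A $C^1$-small perturbation of $Dg_0$ on $F_h$ alone pushes $|\lambda|$ across $1$ while leaving the remaining eigenvalues inside their hyperbolic regime. The resulting $\tilde g\in\mathcal{U}(f)$ is arbitrarily $C^1$-close to $g_0$; $q$ is a hyperbolic periodic point of $\tilde g$ whose index differs from $\mbox{index}(p_{\tilde g})$, and by the choice of support the transverse intersections $W^s(q)\cap W^u(p_{g_0})$ and $W^u(q)\cap W^s(p_{g_0})$ survive, so $q\in H(p_{\tilde g})$. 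This contradicts the $C^1$-robust constancy of the index of periodic points of $H(p_{(\cdot)})$ throughout $\mathcal{U}(f)$, so $c=0$ and $H(p_{g_0})$ is hyperbolic.

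The main obstacle is the first step: extracting a periodic point $q\in H(p_{g_0})$ that simultaneously has weakly hyperbolic behaviour along $F_h$ and is homoclinically related to $p_{g_0}$. The one-dimensionality and non-hyperbolicity of $F_h$ are essential here, reducing the control of Lyapunov behaviour on $F_h$ to a single real eigenvalue and allowing the use of ergodic-closing-lemma arguments on invariant measures supported on $H(p_{g_0})$ with exponent on $F_h$ close to $0$; the homoclinic relation is then secured through Hayashi-style connecting-lemma machinery that is implicit in \cite{BDP,BDPR}. The same one-dimensionality ensures that the Franks perturbation genuinely produces an index change, rather than, for instance, a complex pair of modulus $1$.
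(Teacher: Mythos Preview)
Your overall strategy---produce a periodic point with a weak central eigenvalue, then use Franks' Lemma to flip its index and contradict the robust index-constancy---is exactly the paper's. The gap is in the last step, where you conclude $q\in H(p_{\tilde g})$. Once you push $|\lambda|$ across $1$, the index of $q$ changes, so $\dim W^s(q,\tilde g)\neq\dim W^s(q,g_0)$; the old transverse intersection $W^s(q,g_0)\cap W^u(p_{g_0})$ cannot persist as a \emph{transverse} intersection for $\tilde g$, since $\dim W^s(q,\tilde g)+\dim W^u(p_{\tilde g})=d-1<d$ (or the analogous obstruction in the other direction). At best you obtain a heterodimensional cycle between $q$ and $p_{\tilde g}$, which does not by itself place $q$ in $H(p_{\tilde g})$; indeed two points of different index can never be homoclinically related, so ``preserving the homoclinic relation through the perturbation'' is impossible in principle.

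The paper bypasses this by using the standing hypothesis that $H(p)$ is isolated, hence (by \cite{BC} or \cite{Ab}) $C^1$-robustly isolated: there is a neighborhood $U$ of $H(p)$ such that for every nearby $h$ the chain recurrence class through $p_h$ equals $R(h)\cap U=H(p_h)$. The ergodic-closing and Franks perturbations are supported in $U$, so the resulting periodic points lie in $U$ and are automatically in $H(p_h)$, with no need to track invariant manifolds. Your argument is easily repaired by invoking this isolation instead of the claimed survival of transverse intersections; note also that this makes the effort to get $q$ homoclinically related to $p_{g_0}$ before the Franks perturbation unnecessary, which removes precisely the step you flagged as the main obstacle.
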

\begin{proof}
We follow the lines of the proof at \cite[Section 6]{BD1}. Since
$H(p)$ is isolated by \cite[Corollaire 1.13]{BC} or \cite[Theorem
A]{Ab} it is robustly isolated. Let $E$ and $F$ be sub bundles such
that $T_{H(p_g)}M=E\oplus F$ is $m$-dominated, for all $g\in
\mathcal{U}(f)$, with $\dim(E)=\mbox{index}(p)$. We need to prove
that $\|Df^n_{/E(x)}\| \to 0$ as $n\to +\infty$ and
$\|Df^{-n}_{/F(x)}\| \to 0$ as $n\to +\infty$ for any $x\in H(p_g)$
in order to prove that $H(p_g)$ is hyperbolic. Let us show only that
$\|Df^n_{/E(x)}\| \to 0$ as $n\to +\infty$, the other one being
similar. For this, it is enough to show that for any $x\in H(p_g)$
there exists $k=k(x)$ such that
$\prod_{i=0}^k\|Dg^m_{/E(g^{im}(x))}\|<\frac{1}{2}.$

  Arguing by contradiction, assume this does not hold. Then, there
exist  $z\in H(p_g)$ such that
$\prod_{i=0}^k\|Df^m_{/E(f^{im}(z))}\|\ge \frac{1}{2}\;\;\forall k\ge 0.$

 As in the proof of \cite[Theorem B]{Ma2} we may find
  $y\in H(p_g)\cap \Sigma(g)$, where $\Sigma(g)$ is a set of total probability measure, such that
  $$\lim_{n\to +\infty}\frac{1}{n}\sum_{i=0}^{n-1}\log\|D_{g^{mi}(y)}g^m\,
\big/\,E(g^{mi}(y)\|\geq 0$$

  Thus there is a perturbation $h$ of $g$ such that $h$ has a non hyperbolic periodic point in $H(p_h)$.
  After a new perturbation we obtain periodic points $P$ and $Q$
   contained in a small neighborhood $U$ of $H(p_h)$ and with different indeces.
   Since $H(p)$ is $C^1$-robustly isolated $P,Q\in H(p_h)$ contradicting our assumption
  that in a $C^1$-robust way the index of periodic points in $H(p)$ are the same
and equal to $\mbox{index}(p)$. This finishes the proof of Theorem \ref{prin3} in this case.
\end{proof}

\medskip

In the second case, that is,
there are $g$ arbitrarily $C^1$-close to $f$ such that in $H(p_g)$ there are
periodic points of different indeces,
 by \cite{GW}, $C^1$-generically
the diffeomorphism $g$, and hence $f$, can be $C^1$ approximated by
diffeomorphisms exhibiting a heterodimensional cycle.  Next we show
 that in this case the eigenvalues of periodic points are robustly
in $\R$.

\begin{Lem} \label{extremos2}
Let us assume that there is a periodic point $q\in H(p)$ with
expanding complex eigenvalues such that
$\mbox{index }(q)<\mbox{index }(p)$. Then there is an arbitrarily
$C^1$-small perturbation of $f$ creating a tangency inside the
perturbed homoclinic class $H(p_g)$.
\end{Lem}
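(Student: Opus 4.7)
The approach is to use the rotation induced by the complex expanding eigenvalues of $q$ to tilt, via Franks' Lemma \ref{Fr}, a transverse intersection of invariant manifolds of saddles in $H(p)$ into a tangency, in the spirit of the Bonatti--D\'{\i}az argument from \cite{BD1}.

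First I would set up a heterodimensional cycle between $p$ and $q$. Since $q\in H(p,f)$ and we are working under the generic hypotheses of Section \ref{genericos}, we have $H(p,f)=H(q,f)$. Hayashi's connecting lemma, applied as in \cite{BD1}, gives, after an arbitrarily $C^1$-small perturbation, a transverse heteroclinic intersection $x\in W^u(p)\cap W^s(q)$ of dimension $\mbox{index}(p)-\mbox{index}(q)\geq 1$, and a second application produces a quasi-transverse intersection $y\in W^u(q)\cap W^s(p)$. By a further perturbation via Lemma \ref{Fr} I may assume that the $Df^{\pi(q)}$-invariant $2$-plane $V\subset E^u(q)$ carrying the complex eigenvalue $\rho e^{i\theta}$, $\rho>1$, has rotation angle $\theta$ irrational modulo $2\pi$, and that $V$ dominates the rest of $E^u(q)$ in norm so that the rotation is the dominant unstable dynamics.

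Next I exploit the rotation in $V$. Track the tangent plane $T_yW^s(p)$ along the backward orbit $\{f^{-n}(y)\}$, which approaches $q$: under $Df^{-\pi(q)}$ the component of this tangent plane along $V$ is contracted by $1/\rho$ and rotated by $-\theta$, so the direction along which $W^s(p)$ approaches $V$ sweeps a dense subset of the projective circle of $V$. Meanwhile, by the $\lambda$-Lemma, forward iterates of a small disk $\Delta\subset W^u(p)$ through $x$ accumulate in $C^1$ on $W^u(q)$, and in particular on any chosen direction in $V$. Choosing integers $n_0,m_0$ so that the tangent direction along $V$ of the piece of $W^s(p)$ near $f^{-n_0}(y)$ and of $f^{m_0}(\Delta)\subset W^u(p)$ become almost opposite near $q$, a final application of Lemma \ref{Fr} on a long finite orbit segment turns this near-alignment into a genuine tangency between $W^u(p_g)$ and $W^s(p_g)$ (or between $W^u(q_g)$ and $W^s(q_g)$, which lies in the same class) for a diffeomorphism $g$ arbitrarily $C^1$-close to $f$.

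Finally, by localizing all the perturbations away from the orbits of $p$, $q$ and from the orbits of the previously constructed transverse connections, these connections persist; hence $p_g$ and $q_g$ remain homoclinically related, $H(p_g)=H(q_g)$, and the tangency lies inside the perturbed homoclinic class. The main obstacle will be the combination of the rotation-alignment step with the containment in the class: the support of the final perturbation must be close enough to $q$ for the dynamics on $V$ to be well approximated by the linear rotation $\rho e^{i\theta}$, but far enough from the other orbits used in the heteroclinic links so that these are not destroyed. This tension is resolved as in \cite[Section 6]{BD1} by choosing a very long orbit segment near $q$ and a very small support, taking advantage of the fact that $\mathcal{U}$ is an open neighborhood of $f$ in which $h$-expansiveness is assumed and that the homoclinic class depends continuously on $g$ by the generic hypotheses of Section \ref{genericos}.
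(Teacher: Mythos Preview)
Your approach and the paper's both aim to exploit the rotation coming from the complex expanding eigenvalue at $q$, but there is a genuine gap in your tangent--space tracking step, and the paper's argument avoids it by a different geometric mechanism.

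The difficulty is in your third paragraph. You track $T_{f^{-n}(y)}W^s(p)$ as $f^{-n}(y)\to q$ and note that its ``component along $V$'' is contracted by $1/\rho$ and rotated by $-\theta$. But that contraction is exactly the problem: since $V\subset E^u(q)$, backward iteration kills the $V$--component, and $T_{f^{-n}(y)}W^s(p)$ converges to a subspace of $E^s(q)$. On the other side, by the $\lambda$--Lemma the pieces of $W^u(p)$ near $q$ have tangent spaces close to $E^u(q)\oplus S$ with $S\subset E^s(q)$ of dimension $\mbox{index}(p)-\mbox{index}(q)$. The sum of $\dim S$ and $\dim T W^s(p)$ is exactly $\dim E^s(q)$, so in general these limiting tangent spaces are \emph{complementary}, not close to intersecting. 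The rotation you invoke acts only on the vanishing $E^u(q)$--component of $T W^s(p)$, so it cannot produce the small angle you need for Franks' Lemma.

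The paper's proof sidesteps this by invoking \cite{BD1} to obtain, after perturbation, a whole \emph{arc} $l\subset W^s(p)\cap W^u(q)$ (not just a quasi--transverse point). The tangent direction of $l$ is then an honest, nonvanishing direction of $T W^s(p)$ lying in $E^u(q)$; under $f^{-\pi(q)}$ the arc $l$ spiralizes around $q$ and the disk $D\subset W^s(p)$ containing $l$ stretches toward $W^s(q)$. Since $W^s(q)\cap W^u(p)\neq\emptyset$ (this is the transverse side of the cycle), the winding strip $D$ is forced to meet $W^u(p)$ with a near--tangency that a further small perturbation turns into a genuine one. In short, the arc $l$ is what supplies the persistent $E^u(q)$--direction inside $T W^s(p)$ that your argument lacks; once you have it, the rotation really does the work.
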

\begin{proof}
$C^1$ generically we may assume that  there is a robust heterodimensional cycle
between $p$ and $q$ and that $W^s(p)\cap W^u(q)$ contains a compact arc $l$
homeomorphic to $[0,1]$, (see \cite{BD1}).
Let us consider a disk $D$ of the same dimension $s$ of $W^s(p)$ and
contained in $W^s(p)$ such that
 $D$ is homeomorphic to $[0,1]\times [-1,1]^{s-1}$ by a homeomorphism $h$ such that
 $h([0,1]\times \{0\}^{s-1}=l$.
 Iterating by $f^{-\pi(q)}$ this arc $l$ spiralizes around $q$ while
$D$ stretches approaching $W^s(q)$.
Since $W^s(q)\cap W^u(p)\neq\emptyset$ there is a $C^1$ small
perturbation of $f$ creating a tangency between $W^s(p_g)$ and $W^u(p_g)$.
\end{proof}

\begin{Cor} \label{acima}
If there is a periodic point $q\in H(p)$  with expanding complex eigenvalues
such that $\mbox{index }(q)<\mbox{index }(p)$ then $H(p)$ is not $C^1$ robustly $h$-expansive.
\end{Cor}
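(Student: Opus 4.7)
The plan is to feed the tangency produced by Lemma \ref{extremos2} into the same horseshoe-creation pipeline that completes the proof of Theorem \ref{prin1}, and thereby contradict the assumed $C^1$-robust $h$-expansiveness of $f/H(p)$. In effect, we skip the first half of the Theorem \ref{prin1} pipeline (which manufactures a tangency out of the failure of domination) because the tangency is already handed to us by Lemma \ref{extremos2}, and we execute the second half (flattening and then creating shrinking horseshoes).

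First I would invoke Lemma \ref{extremos2}: under the hypothesis of the corollary one obtains an arbitrarily small $C^1$-perturbation $g_1\in\mathcal{U}(f)$ for which $W^s(p_{g_1},g_1)$ and $W^u(p_{g_1},g_1)$ meet non-transversally at some homoclinic point $x\in H(p_{g_1})$. Using Theorem \ref{BDP} I may assume, after replacing $p_{g_1}$ by a homoclinically related saddle still inside $H(p_{g_1})$, that the eigenvalues at the base periodic point are all real, positive and of multiplicity one; then Proposition \ref{buenangulo} lets me arrange, by a further small $C^1$-perturbation $g_2\in\mathcal{U}(f)$, that the common tangent line at $x$ iterates forward onto the weakest contracting eigendirection and backward onto the weakest expanding one at $p_{g_2}$.

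Next I would apply Proposition \ref{chato} to $g_2$: yet another $C^1$-small perturbation $g_3\in\mathcal{U}(f)$ produces a flat tangency, i.e.\ a compact arc $\beta\subset W^s(p_{g_3})\cap W^u(p_{g_3})$, whose forward and backward iterates still lie along the extremal eigendirections at $p_{g_3}$. This is precisely the geometric configuration required as input in Subsection \ref{herradurita}, so I can run the bump-function construction of that subsection verbatim, obtaining a final $C^1$-perturbation $G\in\mathcal{U}(f)$ and, by Lemma \ref{inesperado}, a sequence of pairwise disjoint small horseshoes $H_{\epsilon_n}\subset H(p_G,G)$ with positive topological entropy and $\diam(H_{\epsilon_n})\to 0$. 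Exactly as in Lemma \ref{inesperado}, such shrinking horseshoes prevent any $\epsilon>0$ from being a constant of $h$-expansiveness for $G/H(p_G)$, contradicting our hypothesis that $H(p)$ was $C^1$-robustly $h$-expansive.

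The main obstacle is essentially bookkeeping rather than new ideas: each link of the chain (the replacement of the base saddle via Theorem \ref{BDP}, the angle-straightening of Proposition \ref{buenangulo}, the flattening of Proposition \ref{chato}, and finally the bump perturbation of Subsection \ref{herradurita}) must be realised by a $C^1$-perturbation lying inside the prescribed neighbourhood $\mathcal{U}(f)$, and the resulting flat arc and its horseshoes must genuinely sit inside the continuation of the homoclinic class. The former follows by concatenating finitely many arbitrarily small perturbations; the latter because Lemma \ref{extremos2} produces the tangency between $W^s(p_g)$ and $W^u(p_g)$ themselves, and the subsequent perturbations are supported in an arbitrarily small neighbourhood of $x$ (away from the saddle) so that $\beta$, and the horseshoes built on top of it, remain contained in $H(p_G,G)$ throughout.
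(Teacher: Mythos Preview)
Your proposal is correct and follows essentially the same route as the paper's proof: invoke Lemma \ref{extremos2} to obtain a tangency inside the (perturbed) homoclinic class, then feed that tangency into the horseshoe-creation machinery of Subsection \ref{herradurita} to contradict $h$-expansiveness. The paper compresses all of this into two sentences, while you spell out the intermediate normalisations (Theorem \ref{BDP}, Proposition \ref{buenangulo}, Proposition \ref{chato}) explicitly; this is just a more detailed rendering of the same argument.
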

\begin{proof}
Under the hypothesis of the lemma we may create tangencies inside
$H(p)$ and by another
$C^1$- perturbation an arbitrarily small horseshoe in the
intersection between $W^s_{loc}(p)$ and $W^u_{loc}(p)$ contradicting
$h$-expansiveness.
\end{proof}

Thus Corollary \ref{acima} implies that  the eigenvalues of periodic
points in $H(p)$ are real numbers in a robust way. By \cite{ABCDW}
for $C^1$-generic diffeomorphisms the set of indices of the
(hyperbolic) periodic points in a homoclinic class form an interval
in $\N$. Thus by \cite{BD1}[Theorem 2.1] there are diffeomorphisms
arbitrarily $C^1$-close to $f$ with $C^1$-robust heterodimensional
cycles.

As a consequence we obtain in both cases the following result:
 \begin{Teo} \label{solita}
If $f/H(p)$ is $C^1$ robustly $h$-expansive and $H(p)$ is an
isolated homoclinic class then for a dense open subset
$\mathcal{U}'\subset \mathcal{U}(f)$ either $f/H(p)$ is hyperbolic
and we have $T_{H(p)}M=E^{s}\oplus E^{u}$ or there is a robust
heterodimensional cycle in $H(p_g)$ for $g$ arbitrarily close to
$f$.
 \end{Teo}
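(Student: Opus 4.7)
The plan is to assemble the preceding lemmas into a single dichotomy on a $C^1$-open and dense subset of $\mathcal{U}(f)$. By Theorem \ref{prin2}, for every $g\in\mathcal{U}(f)$ the bundle $T_{H(p_g)}M$ carries a finest dominated splitting of the form $E\oplus F_1\oplus\cdots\oplus F_c\oplus G$ with every $F_j$ one-dimensional and non-hyperbolic. Since $H(p)$ is isolated, isolation passes to the continuation $H(p_g)$ on a possibly smaller $C^1$-neighbourhood, which we still call $\mathcal{U}(f)$, and Lemma \ref{extremos1} then yields that $E$ is uniformly contracting and $G$ uniformly expanding throughout $\mathcal{U}(f)$. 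The only question is whether the central one-dimensional bundles $F_j$ can persist.

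Split $\mathcal{U}(f)$ according to the robust behaviour of the indices of the periodic points of the continuation. Let $\mathcal{A}\subset\mathcal{U}(f)$ be the $C^1$-interior of the set of $g$ for which every periodic point of $H(p_g)$ has index equal to $\mbox{index}(p)$, and let $\mathcal{B}=\mathcal{U}(f)\setminus \overline{\mathcal{A}}$. On $\mathcal{A}$, Lemma \ref{indiceconstante} provides a $C^1$-dense open subset $\mathcal{U}_1\subset\mathcal{A}$ on which $H(p_g)$ is hyperbolic; hence $c=0$ and $T_{H(p_g)}M=E^s\oplus E^u$, so the first alternative of Theorem \ref{solita} holds on $\mathcal{U}_1$.

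On $\mathcal{B}$, by definition every $g$ is $C^1$-approximated by some $g'$ whose homoclinic class $H(p_{g'})$ contains hyperbolic periodic points of two distinct indices. Corollary \ref{acima}, applied under the robust $h$-expansiveness hypothesis, forbids periodic points of $H(p_{g'})$ from having complex expanding or contracting eigenvalues (in a $C^1$-robust way), placing us in the setting where \cite{ABCDW} ensures that the indices realised inside $H(p_{g'})$ form an interval in $\N$. Then \cite[Theorem 2.1]{BD1} produces, after an arbitrarily small $C^1$-perturbation, a $C^1$-robust heterodimensional cycle associated to the continuation of $H(p)$. Because exhibiting a robust heterodimensional cycle is itself an open $C^1$-condition, the set $\mathcal{R}_{\mathrm{het}}\subset\mathcal{B}$ of diffeomorphisms for which $H(p_g)$ contains such a cycle is open and dense in $\mathcal{B}$, and the second alternative of Theorem \ref{solita} holds on $\mathcal{R}_{\mathrm{het}}$. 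Setting $\mathcal{U}'=\mathcal{U}_1\cup\mathcal{R}_{\mathrm{het}}$ gives the required dense open subset of $\mathcal{U}(f)$.

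The main obstacle lies in the bookkeeping on $\mathcal{B}$: one must check that the successive perturbations producing points of distinct indices stay inside $\mathcal{U}(f)$ (so that Corollary \ref{acima} remains applicable), that the heterodimensional cycle obtained from \cite{BD1} is genuinely associated to the continuation of $H(p)$ rather than to a disjoint homoclinic class, and that $C^1$-robustness of the cycle propagates to an open neighbourhood inside $\mathcal{B}$. Once these points are checked, openness of each alternative together with density of their union on $\overline{\mathcal{A}}\cup\mathcal{B}=\mathcal{U}(f)$ closes the argument.
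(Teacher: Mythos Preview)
Your proposal is correct and follows essentially the same approach as the paper: split $\mathcal{U}(f)$ according to whether the index of periodic points in $H(p_g)$ is $C^1$-robustly constant, apply Lemma~\ref{indiceconstante} on the constant-index side to obtain hyperbolicity on a dense open set, and on the complementary side invoke Corollary~\ref{acima}, \cite{ABCDW}, and \cite[Theorem~2.1]{BD1} to produce robust heterodimensional cycles. Your version is in fact more explicit than the paper's own (rather terse) argument, and the bookkeeping issues you flag at the end are precisely the points the paper leaves implicit.
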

\begin{proof}
If we have that in a $C^1$-robust way the index of periodic points
in $H(p_g)$ are the same and equal to $\mbox{index}(p_g)$ by Lemma
\ref{indiceconstante} there is an open dense subset $\mathcal{V}$ of
$\mathcal{U}(g)$ such that $H(p_g)$ is hyperbolic for $g\in
\mathcal{V}$. Hence we are done. Otherwise we have an open subset
$\mathcal{U}(g)$ in any neighborhood $\mathcal{V}\subset
\mathcal{U}(f)$ of any diffeomorphism $g\in\mathcal{U}(f)$
exhibiting a heterodimensional cycle, \cite{BD1}.
 This finishes the proof Theorem \ref{solita}, which in its turn
gives the proof of Theorem \ref{prin3}.
\end{proof}

\medbreak
\noindent {\bf $f$ is wild}\/
Now let us assume that $H(p)$ is not isolated. Either there is a
small $C^1$-perturbation $g$ of $f$ such that $H(p_g)$ is isolated
 or $H(p)$ is persistently not isolated, i.e.: $H(p_g)$ is not isolated
for any $g$ close to $f$. In the first case  we are done by Theorem
\ref{solita}.

 In the second case the
following result of \cite{Cr} (see also \cite{W}) is valid assuming that $f$ is far from
homoclinic cycles.
\begin{Obs}
Since $f/H(p)$ is $h$-expansive we are far from homoclinic
tangencies.
\end{Obs}

\begin{Teo}[Crovisier] \label{Crovisier}
 There exists a dense $G_\delta$ subset of $\dif^1(M)\backslash
\overline{\rm{Tang}\cup\rm{Cycles}}$ such that each homoclinic class
$H$ has a dominated splitting $T_HM = E^s \oplus E^c_1 \oplus E^c_2
\oplus E^u$ which is partially hyperbolic and such that each central
bundle $E^c_1,E^c_2$ has dimension 0 or 1.
\end{Teo}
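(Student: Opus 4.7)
The plan is to work inside the residual subset $\mathcal{G}$ of $\dif^1(M)$ described in the generic assumptions, intersected with $\dif^1(M)\setminus\overline{\rm{Tang}\cup\rm{Cycles}}$, and show that each homoclinic class $H=H(p)$ of any $f$ in this residual set admits the desired splitting. First, by \cite{ABCDW} combined with the density of periodic points in $H$, the set of indices of hyperbolic saddles in $H$ forms an interval $\{i,i+1,\ldots,j\}$ in $\N$. Since $f$ is far from homoclinic tangencies, the theorem of Gourmelon \cite[Theorem 6.6.8]{Go} (essentially Wen's theorem) supplies a dominated splitting of the finest form
$$T_HM = E \oplus F_1\oplus\cdots\oplus F_{j-i}\oplus G,$$
with $\dim E = i$, $\dim G = d-j$, and $\dim F_h = 1$ for every $h$; this reduces the problem to (i) proving that the extremal sub-bundles are uniformly hyperbolic and (ii) bounding the number of central summands.

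Second, I would argue that $E$ is uniformly contracting and $G$ uniformly expanding, by a Pliss-type argument exploiting the far-from-cycles hypothesis. Assume, for contradiction, that $E$ fails to be contracting. Then along some invariant probability measure supported on $H$ the top Lyapunov exponent of $E$ is non-negative. Combining Pliss's lemma with Mañé's ergodic closing lemma produces a periodic point in a nearby homoclinic class whose stable dimension is strictly less than $i$. A connecting-lemma perturbation (in the spirit of \cite{BD1}) then links this periodic point to $p_g$, producing a heterodimensional cycle and contradicting the assumption that $f \notin \overline{\rm{Cycles}}$. A symmetric argument, applied to $f^{-1}$, shows that $G$ is uniformly expanding.

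The decisive step, and the one I expect to be the main obstacle, is to prove that $j-i \leq 2$ and to exhibit the decomposition of the central part as $E^c_1 \oplus E^c_2$ with each summand of dimension at most one. Here one needs more than the pointwise existence of a dominated splitting: the strategy, following the central-model analysis of \cite{Cr}, is to show that if three consecutive indices $i<i+1<i+2$ are realized by periodic saddles in $H$, then an iterated application of Hayashi's connecting lemma inside the corresponding central directions produces, after an arbitrarily small $C^1$-perturbation, a robust heterodimensional cycle; this contradicts $f\notin \overline{\rm{Cycles}}$. Consequently $j - i \leq 2$ and the finest dominated splitting in the center consists of at most two one-dimensional bundles, which we label $E^c_1$ and $E^c_2$ (allowing either to be trivial).

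Finally, the required dense $G_\delta$ is obtained as the intersection of the countably many residual subsets invoked above: Kupka-Smale, the ABCDW residual, the Bonatti-Crovisier residual ensuring that homoclinic classes coincide with chain-recurrence classes and depend continuously on $f$, and the residual set on which the perturbation arguments succeed simultaneously for every periodic saddle. Setting $E^s = E$ and $E^u = G$ yields the partially hyperbolic splitting $T_HM = E^s \oplus E^c_1 \oplus E^c_2 \oplus E^u$ claimed in the statement.
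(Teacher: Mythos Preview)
The paper does not prove this statement at all: Theorem~\ref{Crovisier} is quoted verbatim as a result of Crovisier \cite{Cr} (with a pointer to Wen \cite{W}) and is immediately applied to deduce Theorem~\ref{prin4}. There is therefore no proof in the paper to compare your proposal against.

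That said, your sketch contains a genuine gap. You identify Crovisier's central bundles $E^c_1\oplus E^c_2$ with the one-dimensional $F_h$'s of the ABCDW/Gourmelon decomposition and then try to bound their number by two. But the $F_h$'s are indexed by the range $\{i,\ldots,j\}$ of indices of saddles in $H$, and generically (Gan--Wen \cite{GW}, Bonatti--D\'{\i}az \cite{BD1}) \emph{any} two saddles of distinct index in the same chain class yield a heterodimensional cycle after perturbation. So far from $\overline{\rm Cycles}$ your own logic forces $j=i$, not $j-i\le 2$; your step~(iii) is miscalibrated and, combined with step~(ii), would prove outright hyperbolicity of every homoclinic class --- the Weak Palis Conjecture --- rather than partial hyperbolicity with at most two central directions. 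The missing idea is precisely what makes Crovisier's argument hard: in step~(ii) you assert that Pliss plus the ergodic closing lemma produces a periodic point of index $<i$ \emph{inside the chain class of $p_g$}, but the closing lemma only gives a periodic point that shadows an orbit of $H(p)$, and placing it in the same chain class as $p_g$ (so that a cycle can be created) is exactly where Crovisier's central-model analysis enters. Note also that the paper itself, in the tame case, obtains hyperbolicity of the extremal bundles only by invoking robust transitivity and \cite{BDPR} (Lemma~\ref{extremos1}); it explicitly defers the non-isolated case to \cite{Cr}, which is a signal that the Pliss/closing shortcut you propose does not go through without additional structure.
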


 Thus  Theorem \ref{prin4} is a consequence of Theorem
 \ref{Crovisier} and the previous remark.

\begin{tabbing}
Universidade Federal do Rio de Janeiro, \hspace{1cm}\= Facultad de Ingenieria, \kill
 M. J. Pacifico, \> J. L. Vieitez, \\
Instituto de Matematica, \> Instituto de Matematica,\\
Universidade Federal do Rio de Janeiro, \> Facultad de Ingenieria,\\
C. P. 68.530, CEP 21.945-970, \> Universidad de la Republica,\\
Rio de Janeiro, R. J. , Brazil. \> CC30, CP 11300,\\
                                \> Montevideo, Uruguay\\
{\it pacifico@im.ufrj.br} \> {\it jvieitez@fing.edu.uy}
\end{tabbing}

\end{document}